\documentclass[12pt,sort&compress]{elsarticle}
\usepackage{amsmath,amsfonts,url}
\usepackage{microtype}
\usepackage{tikz}
\usepackage[inline]{enumitem}

\newtheorem{theorem}{Theorem}
\newtheorem{proposition}{Proposition}
\newproof{proof}{Proof}
\newdefinition{definition}{Definition}
\newdefinition{algorithm}{Procedure}
\newcommand{\cross}{\operatorname{cr}}
\newcommand{\nest}{\operatorname{ne}}
\newcommand{\figurefontsize}{\footnotesize}
\tikzstyle{pnt}=[circle,fill,inner sep=1pt]
\tikzstyle{cnt}=[dashed, very thick]
\tikzstyle{loop above}=[above,out=135,in=45,loop]
\newcommand{\Arc}{\operatorname{Arc}}

\begin{document}

\begin{frontmatter}
\title{A Bijection for Crossings and Nestings}
\author{Lily Yen}
\address{Dept.\ of Math.\ \& Stats., Capilano University, North Vancouver, B.C., Canada;\\{\normalfont also:} Dept.\ of Math., Simon Fraser University, Burnaby, B.C., Canada}
\begin{keyword}
\MSC[2010] 05A19
\end{keyword}

\begin{abstract}
For a subclass of matchings, set partitions, and  permutations, we describe a direct bijection involving only arc annotated diagrams that not only interchanges maximum nesting and crossing numbers, but also all refinements of crossing and nesting numbers.
Furthermore, we show that the bijection cannot be extended to a larger class of arc annotated diagrams while retaining a global structure. We apply the bijection to a similar subclass of coloured  matchings, set partitions, and permutations.
\end{abstract}
\end{frontmatter}
\thispagestyle{empty}

\section{Introduction}\label{sec:introduction}

A perfect matching on $[2n] = \{ 1, 2, 3, \dots, 2n\}$ is a partition of $[2n]$ into only $2$-element subsets. We can list its $n$ blocks as $\{ (i_1, j_1), (i_2, j_2), \dots, (i_n, j_n)\}$ where $i_r < j_r$ for $1 \le r \le n$. Pictorially, a matching can be drawn as an \emph{arc annotated diagram} where the elements $[2n]$ are drawn as vertices on a horizontal line increasingly labelled and elements of the same block are joined with an arc.
An example of a matching thus represented is shown in Figure~\ref{fig:matching}.

\begin{figure}[hbp]
\figurefontsize\centering
\begin{tikzpicture}
   \foreach \i in {1,...,10} {
      \node[pnt] (a\i) at (\i, 0) {};
      \node[right] at (a\i.east) {$\i$};
   }
   \draw[bend left=30] (a1) to (a9);
   \draw[bend left=45] (a2) to (a5);
   \draw[bend left=45] (a3) to (a6);
   \draw[bend left=45] (a4) to (a7);
   \draw[bend left=45] (a8) to (a10);
\end{tikzpicture}
\caption{A Matching of the set $\{1, 2, \dots, 10\}$}
\label{fig:matching}
\end{figure}
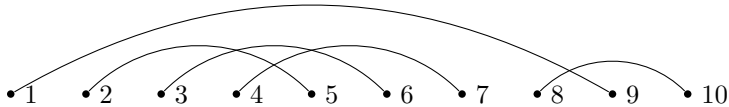

We say that $2$ blocks $\{ (i_r, j_r), (i_s, j_s) \}$ in the matching (or equivalently $2$ arcs in the arc annotated diagram representing the matching) form a \emph{crossing} if $i_r < i_s < j_r < j_s$, that is, their arcs cross. In Figure~\ref{fig:matching}, one notes $4$ crossings, one of which is $\{ (2,5), (3,6)\}$. Analogously, we say that $2$ blocks 
$\{ (i_r, j_r), (i_s, j_s) \}$ form a \emph{nesting} if $i_r < i_s < j_s < j_r$, or the arcs nest. Figure~\ref{fig:matching} shows $3$ nestings, one of which is $\{ (3,6), (1, 9)\}$.

It is well known that the number of perfect matchings on $[2n]$ without any crossing is counted by the $n$-th Catalan number, $c_n = \binom{2n}{n}/(n+1)$ for $n \ge 1$, and so is the number of perfect matchings on $[2n]$ without any nesting. Thus began the exploration of symmetric joint distribution between crossings and nestings for a large class of combinatorial objects. For instance, for the classical reflection groups, Athanasiadis for type $A$~\cite{Atha98} and Fink and Giraldo for the other types~\cite{FiGi10} gave bijective proofs between noncrossing and nonnesting set partitions preserving block sizes. Furthermore, Rubey and Stump~\cite{RuSt10} exhibited bijections on various classes of set partitions of classical types that preserve openers and closers. Parallel to the development of bijective proofs for nonnesting and noncrossing set partitions of classical refection groups is the study of joint distribution between crossing and nesting statistics of two edges in matchings by Klazar~\cite{Klazar06}, set partitions by Poznanovi{\'c} and Yan~\cite{PoYa09} and both by Kasraoui and Zeng~\cite{KaZe06}. Translating arc diagrams of matchings (resp.\ set partitions) to Dyck paths (resp.\ bi-coloured Motzkin paths) combined with tunnels and Charlier diagrams, Klazar, Poznanovi{\'c}, and Yan obtained bijective proofs and generating series as continued fractions. 

When the concept of noncrossing (resp.\ nonnesting)  is extended to $k$-crossing
shown in Figure~\ref{fig:kcrossing} (resp.\ $k$-nesting  shown in Figure~\ref{fig:knesting}), combinatorialists since the 1950's  have found that symmetric joint distribution continues to hold for  $k = 2$ and $3$.
(Formal definitions will be given in Section~\ref{sec:matchings}.) 
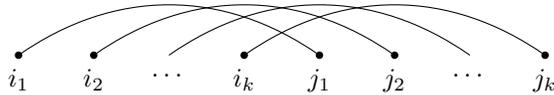
\begin{figure}
\figurefontsize\centering
\begin{tikzpicture}[bend left=35]
   \node[pnt,label=below:$i_1$] at (1,0) {};
   \node[pnt,label=below:$i_2$] at (2,0) {};
   \node[label=below:$\dots$] at (3,0) {};
   \node[pnt,label=below:$i_k$] at (4,0) {};
   \node[pnt,label=below:$j_1$] at (5,0) {};
   \node[pnt,label=below:$j_2$] at (6,0) {};
   \node[label=below:$\dots$] at (7,0) {};
   \node[pnt,label=below:$j_k$] at (8,0) {};
   \draw (1,0) to (5,0);
   \draw (2,0) to (6,0);
   \draw (3,0) to (7,0);
   \draw (4,0) to (8,0);	
\end{tikzpicture}
\caption{The arc diagram of a $k$-crossing}
\label{fig:kcrossing}
\end{figure}

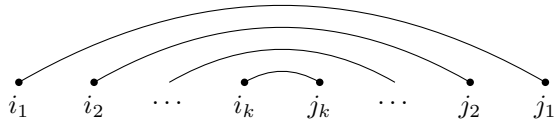
\begin{figure}
\figurefontsize\centering
\begin{tikzpicture}[bend left=30]
   \node[pnt,label=below:$i_1$] at (1,0) {};
   \node[pnt,label=below:$i_2$] at (2,0) {};
   \node[label=below:$\dots$] at (3,0) {};
   \node[pnt,label=below:$i_k$] at (4,0) {};
   \node[pnt,label=below:$j_k$] at (5,0) {};
   \node[label=below:$\dots$] at (6,0) {};
   \node[pnt,label=below:$j_2$] at (7,0) {};
   \node[pnt,label=below:$j_1$] at (8,0) {};
   \draw (1,0) to (8,0);
   \draw (2,0) to (7,0);
   \draw (3,0) to (6,0);
   \draw (4,0) to (5,0);
\end{tikzpicture}
\caption{The arc diagram of a $k$-nesting}
\label{fig:knesting}
\end{figure}

We call a matching without any $k$-nesting a \emph{$k$-nonnesting} matching and similarly for a \emph{$k$-noncrossing} matching. Gouyou-Beauschamps~\cite{GB89} first studied the enumeration of $3$-nonnesting matchings and found symmetric joint distribution between $3$-nonnesting matchings and $3$-noncrossing matchings. In addition,  Chen, Deng, Du, Stanley, and Yan~\cite{Chetal07} showed that for any $k$, the number of $k$-noncrossing matchings of $[2n]$ forms a P-recursive sequence, that is, the sequence satisfies a linear recurrence relation with polynomial coefficients. Bousquet-M\'elou~\cite{Bous11} used the method of generating trees to produce a determinantal expression that enumerates involutions with no long descending subsequence which translates to $k$-nonnesting partial matchings (fixed points allowed).

Chen et al.~\cite{Chetal07} introduced the concept of maximal nesting and crossing of a matching by defining $cr(M)$ to be the maximal $i$ such that $M$ has an $i$-crossing and $ne(M)$ to be the maximal $j$ such that $M$ has a $j$-nesting. Denote by $f_n(i,j)$ the number of matchings $M$ on $[2n]$ with $cr(M) = i$ and $ne(M)=j$. Then they showed a bijection via vacillating tableaux that
\[
f_n(i,j) = f_n(j,i).
\]
In particular, the number of matchings with $cr(M) = k$ is equal to the number of matchings with $ne(M)=k$. Their bijection applies to set partitions as well. Krattenthaler~\cite{Kratt06} proved the same result using growth diagrams. Bousquet-M\'elou and Xin~\cite{BoXi06} showed that the sequence counting $3$-noncrossing partitions is P-recursive. They further conjectured that for $k \ge 4$, the sequence counting $k$-noncrossing partitions is not $P$-recursive.

Extending the construction from matchings and set partitions to permutations, Burrill, Mishna, and Post~\cite{BuMiPo10} applied  de Mier's result on embedded graphs~\cite{deMi07} using Krattenthaler's growth diagrams and fillings of Ferrers diagrams~\cite{Kratt06} to establish a symmetric joint distribution property between $k$-crossing and $k$-nesting for permutations. All these bijections involve passing through a sequence  of tableau related objects or lattice paths.  One enticing question for such combinatorial objects is whether a more direct bijection exists (staying within arc annotated diagrams) that interchanges maximal nesting and crossing numbers. For a smaller class of matchings, set partitions, and permutations,  we show a simple bijection which not only interchanges maximal nesting and crossing numbers, but also all refinements of crossing and nesting numbers. Like bijections of~\cite{KaZe06, Chetal07, RuSt10}, our bijection also preserves openers and closers in addition to a global structure.

The first attempt of turning a maximal crossing into a maximal nesting does not work because the example in Figure \ref{fig:notwork} shows that switching a maximal crossing locally does not produce a matching with the correct maximal nesting.
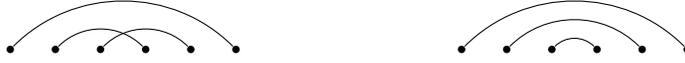
\begin{figure}
\figurefontsize\centering
\begin{tikzpicture}[scale=0.6]
   \node[pnt] at (0,0)(1){};
   \node[pnt] at (1,0)(2){};
   \node[pnt] at (2,0)(3){};
   \node[pnt] at (3,0)(4){};
   \node[pnt] at (4,0)(5){};
   \node[pnt] at (5,0)(6){};
   \node[pnt] at (10,0)(1b){};
   \node[pnt] at (11,0)(2b){};
   \node[pnt] at (12,0)(3b){};
   \node[pnt] at (13,0)(4b){};
   \node[pnt] at (14,0)(5b){};
   \node[pnt] at (15,0)(6b){};
   \draw(1)  to [bend left=45] (6);
   \draw(2)  to [bend left=45] (4);
   \draw(3)  to [bend left=45] (5);
   \draw(1b)  to [bend left=45] (6b);
   \draw(2b)  to [bend left=45] (5b);
   \draw(3b)  to [bend left=45] (4b);
\end{tikzpicture}
\caption{A maximal $2$-crossing turns into a maximal $3$-nesting through local switching}
\label{fig:notwork}
\end{figure}

To simplify the description of the bijection, we first introduce the concept of a decomposable arc annotated diagram. Apply the bijection to each indecomposable structure first, then concatenate the pieces to obtain the result. The plan of the paper is to describe the bijection in detail for perfect matchings, then to use the method of inflation to transform partial matchings and set partitions to perfect matchings where the bijection applies. Finally, we show that permutations can be treated the same way by splitting the arc annotated diagrams to upper and lower arc diagrams. Enumeration of admissible objects for each class follows by first finding the generating series for each indecomposable component, then applying the sequence construction. The functional expressions thus obtained are conjectured to be non-holonomic. We extend the bijection to coloured arc annotated diagrams for matchings, set partitions, and permutations, then close by discussing limitations of this bijection.

\section{Matchings}\label{sec:matchings}

The definitions already introduced in Section \ref{sec:introduction} for perfect matchings are assumed. We define below the concept of  \emph{decomposable} arc diagrams.

\begin{definition}
An arc diagram is \emph{decomposable} if  a vertical line swept from the first vertex of the diagram to the last encounters at least once no intersection with any arc   strictly  between the first and the last vertices.
\end{definition}

When an arc diagram is not decomposable, we call it \emph{indecomposable}. Figure~\ref{fig:matching} is indecomposable. Whenever considering a given arc diagram, we consider each indecomposable interval separately.

The vertices of a matching can be of two types: \emph{openers} and \emph{closers}. Figure~\ref{fig:matching} has $\{1, 2, 3, 4, 8\}$ as the set of openers and $\{5, 6, 7, 9, 10\}$ as its closers; the maximal crossing number for Figure~\ref{fig:matching} is $3$ (from $\{(2,5), (3,6), (4,7) \}$), and the maximal nesting number is $2$.  (Note $3$ $2$-nestings formed by $\{(1, 9), (2, 5) \}$,  $\{(1, 9), (3, 6) \}$,  $\{(1, 9), (4, 7) \}$,  and $ \{(1, 9), (8, 10) \}$  ).  Furthermore, there is a $2$-crossing formed by $\{(1, 9), (8,10)\}$.  The bijection we describe maps Figure~\ref{fig:matching} to  a matching  with a maximal nesting number of $3$, a maximal crossing number of $2$ with $3$ such $2$-crossings, and also a $2$-nesting elsewhere, thus switching all refinements of crossing and nesting numbers.

We restrict matchings on $[2n]$ to a smaller class where the vertices from the left to the right are first $n$ consecutive openers followed by $n$ consecutive closers, or \emph{Type OC}  in short. After the proof of the theorem, we will extend the procedure of the bijection to include matchings of the type shown in Figure~\ref{fig:matching}. We need another definition before the first theorem.

\begin{definition}
Define the label of a matching $\mu$ to be
\[
	L(\mu) = (n_2, n_3, \dots, n_i; c_2, c_3, \dots, c_j)
\]
where $\mu$ contains $n_2$ $2$-nestings, $n_3$ $3$-nestings, \dots, $n_i$ $i$-nestings and $c_2$ $2$-crossings, $c_3$ $3$-crossings, \dots, $c_j$ $j$-crossings.
\end{definition}
For $\mu$ of Figure~\ref{fig:matching}, $L(\mu)= (3; 4,1)$.
\begin{theorem}
\label{thm:matching}
Given a Type OC matching $\mu \in \mathfrak{M}_{2n}$, that is, $\mu$ contains  $n$ consecutive openers followed by $n$ consecutive closers in an indecomposable interval, such that the label of $\mu$ is
\[
	L(\mu) = (n_2, n_3, \dots, n_i; c_2, c_3, \dots, c_j),
\]
then there is an involution mapping $\mu \in \mathfrak{M}_{2n}$ to a Type OC matching, $\nu \in \mathfrak{M}_{2n}$ with
\[
	L(\nu) = ( c_2, c_3, \dots, c_j;n_2, n_3, \dots, n_i).
\]
\end{theorem}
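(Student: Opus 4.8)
The plan is to recode a Type OC matching as a permutation, recognize its crossings and nestings as monotone subsequences, and realize the required involution as a reflection.

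Since $\mu$ is Type OC, every arc joins an opener in $\{1,\dots,n\}$ to a closer in $\{n+1,\dots,2n\}$, so $\mu$ is simply a bijection from openers to closers. I would encode it by the permutation $\pi\in S_n$ in which $\pi(i)$ records the rank, among the $n$ closers, of the closer matched to the $i$-th opener; this gives a bijection between Type OC matchings in $\mathfrak{M}_{2n}$ and $S_n$. I would also note here that every such matching is automatically indecomposable: a vertical line strictly between positions $p$ and $p+1$ is always crossed, either by the arc leaving opener~$1$ (when $p\le n$) or by the arc entering the last occupied closer (when $p\ge n+1$), so the indecomposability hypothesis imposes no restriction.

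The key observation, which I would verify first, is that the betweenness inequalities in the definitions of $k$-crossing and $k$-nesting collapse under the OC structure. For two arcs with openers $i_r<i_s$, the condition $i_s<j_r$ holds automatically because $i_s\le n<j_r$; hence the arcs cross exactly when $j_r<j_s$ and nest exactly when $j_s<j_r$. By transitivity this extends from pairs to $k$-tuples, so a $k$-crossing (Figure~\ref{fig:kcrossing}) is precisely a set of $k$ openers whose matched closers occur in the same relative order, i.e.\ an increasing subsequence of length $k$ in $\pi$, while a $k$-nesting (Figure~\ref{fig:knesting}) is a set of $k$ openers whose closers occur in reverse order, i.e.\ a decreasing subsequence of length $k$. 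Thus $c_k$ counts increasing subsequences of length $k$ in $\pi$ and $n_k$ counts decreasing subsequences of length $k$, so the label $L(\mu)$ becomes a statement purely about the monotone-subsequence profile of $\pi$.

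With this dictionary, the involution I would use is the reflection of the closer block: reverse the order in which closers are attached, sending the arc from opener $i$ that lands on the closer of rank $r$ to the one landing on the closer of rank $n+1-r$. On the permutation side this is the value-complementation $\pi\mapsto\bar\pi$ with $\bar\pi(i)=n+1-\pi(i)$, which is manifestly an involution, fixes the opener and closer sets, and keeps the diagram Type OC (hence indecomposable). Since complementing values carries every increasing subsequence of $\pi$ bijectively onto a decreasing subsequence of $\bar\pi$ of the same length, and conversely, it interchanges the two counts at every level, yielding a valid $\nu\in\mathfrak{M}_{2n}$ with $L(\nu)=(c_2,\dots,c_j;\,n_2,\dots,n_i)$. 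The main obstacle is the middle paragraph: one must argue carefully that for Type OC diagrams the pattern of Figures~\ref{fig:kcrossing}–\ref{fig:knesting} is genuinely equivalent to \emph{total} monotonicity of the $k$ arcs, not merely pairwise monotonicity, for it is exactly this collapse that lets a single global reflection succeed where the naive local switching of Figure~\ref{fig:notwork} fails. Once this is established, the involution property and the preservation of openers, closers, and the OC structure are all immediate.
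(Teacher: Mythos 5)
Your proposal is correct and is essentially the paper's own ``subsequence approach'': your value-complementation $\pi\mapsto\bar\pi$ is exactly the paper's reverse relabelling of the closers, and your dictionary identifying $k$-crossings (resp.\ $k$-nestings) with increasing (resp.\ decreasing) subsequences of closer ranks is the same observation the paper uses, just spelled out more carefully (in particular your point that $i_s<j_r$ holds automatically in Type OC is the crux the paper leaves implicit). No substantive difference in method.
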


The intuition of the proof comes from a simple reverse relabelling of the closers because a $k$-nesting  and a $k$-crossing are mapped to each other by a simple reverse relabelling of the closers. This idea  may seem extremely na\"\i ve at first, but for this particular subclass of matchings, the idea is sufficient. Formally, a $k$-\emph{crossing} of~$\mu$ is a collection
of~$k$ arcs~$(i_1, j_1)$, $(i_2, j_2)$, \dots, $(i_k, j_k)$ such that
$i_1<i_2< \dots < i_k <j_1 < j_2 < \dots < j_k$ as shown in Figure~\ref{fig:kcrossing}. Similarly, we define a $k$-\emph{nesting} of $\mu$ to be a collection
of $k$ arcs $(i_1, j_1)$, $(i_2, j_2)$, \dots, $(i_k, j_k)$ such that
$i_1<i_2< \dots < i_k <j_k < j_{k-1} < \dots < j_1$ as shown in Figure~\ref{fig:knesting}.

Here, it is clearly seen in Figures~\ref{fig:kcrossing} and~\ref{fig:knesting} that $j_1$, $j_2$, \dots, $j_k$ are labelled in reverse from one diagram to the other. We remark that one can also reverse the labels of the $i$'s (the openers) instead of the $j$'s (the closers) to achieve the same effect depending on the situation.

\begin{proof}[Subsequence approach]
Consider a Type OC matching, $\mu \in \mathfrak{M}_{2n}$. Our goal is to reverse the closer labels of all nestings and crossings. Since  labelling all closers in reverse turns an originally increasing subsequence in the closers to decreasing, all crossings and nestings are switched.
\end{proof}

\begin{proof}[Two-line approach inspired by \cite{MeRo08}]
Pick arbitrarily two o\-pen\-er vertices of $\mu$ , say $a$, $b$, where $a < b$. As the openers are closed, these two arcs either cross or nest.

Represent $\mu$ in a two-line diagram where openers are drawn on the first line increasingly labelled, and closers are drawn on the second line, also increasingly labelled. When $k$ arcs form a $k$-crossing in the standard representation, the corresponding $k$ line segments in the two-line diagram are all non-intersecting. Figure~\ref{fig:2linecross} shows a $k$-crossing from Figure~\ref{fig:kcrossing} drawn in a two-line diagram.
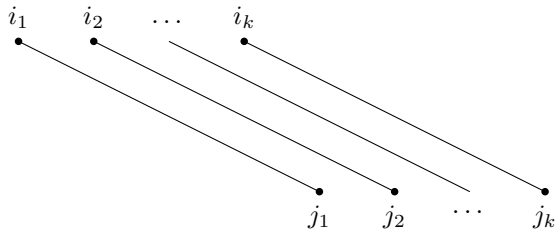
\begin{figure}
\figurefontsize\centering
\begin{tikzpicture}
   \node[pnt,label=above:$i_1$] at (1,0) {};
   \node[pnt,label=above:$i_2$] at (2,0) {};
   \node[label=above:$\dots$] at (3,0) {};
   \node[pnt,label=above:$i_k$] at (4,0) {};
   \node[pnt,label=below:$j_k$] at (8,-2) {};
   \node[label=below:$\dots$] at (7,-2) {};
   \node[pnt,label=below:$j_2$] at (6,-2) {};
   \node[pnt,label=below:$j_1$] at (5,-2) {};
   \draw (1,0) to (5,-2);
   \draw (2,0) to (6,-2);
   \draw (3,0) to (7,-2);
   \draw (4,0) to (8,-2);
\end{tikzpicture}
\caption{A two-line diagram of a $k$-crossing}
\label{fig:2linecross}
\end{figure}

Similarly, when $k$ arcs nest in the standard representation,  these $k$ line segments mutually intersect in the two-line diagram. Figure~\ref{fig:2linenest} shows a $k$-nesting from Figure~\ref{fig:knesting} drawn in a two-line diagram.
\begin{figure}
\figurefontsize\centering
\begin{tikzpicture}
   \node[pnt,label=above:$i_1$] at (1,0) {};
   \node[pnt,label=above:$i_2$] at (2,0) {};
   \node[label=above:$\dots$] at (3,0) {};
   \node[pnt,label=above:$i_k$] at (4,0) {};
   \node[pnt,label=below:$j_k$] at (5,-2) {};
   \node[label=below:$\dots$] at (6,-2) {};
   \node[pnt,label=below:$j_2$] at (7,-2) {};
   \node[pnt,label=below:$j_1$] at (8,-2) {};
   \draw (1,0) to (8,-2);
   \draw (2,0) to (7,-2);
   \draw (3,0) to (6,-2);
   \draw (4,0) to (5,-2);	
\end{tikzpicture}
\caption{A two-line diagram of a $k$-nesting}
\label{fig:2linenest}
\end{figure}
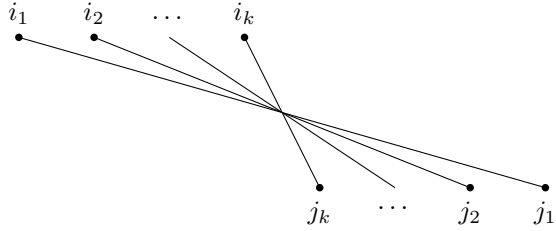
Again, note the effect of reverse relabelling the closers (lower vertices) turning a $k$-crossing to a $k$-nesting.

As one relabels $\mu$'s closer labels in reverse in the two-line diagram to produce a new matching $\nu$, each pair of nesting arcs in $\mu$ forms a crossing in $\nu$, and vice versa. To map it back, just perform the same procedure to $\nu$ to get back to $\mu$.
\end{proof}

\subsection{An Example}

We take a matching $ \mu =\{(1,10), (2,6), (3,7), (4,8), (5, 9) \}$, and label its closers in reverse, then redraw the new matching to produce the image of  $\mu$, namely $\nu$.

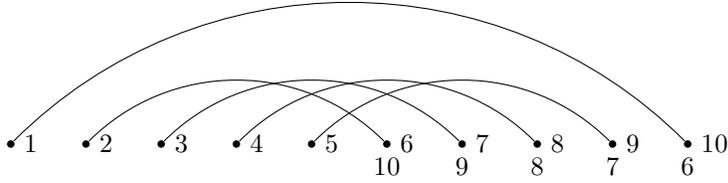
\begin{figure}
\figurefontsize\centering
\begin{tikzpicture}
   \foreach \i in {1,...,10} {
      \node[pnt] (a\i) at (\i, 0) {};
      \node[right] at (a\i.east) {$\i$};
   }
   \foreach \i [evaluate =\i as \j using int(16-\i)] in {6,...,10}
      \node[below] at (a\i.south) {$\j$};
   \draw[bend left=45] (a1) to (a10);
   \draw[bend left=45] (a2) to (a6);
   \draw[bend left=45] (a3) to (a7);
   \draw[bend left=45] (a4) to (a8);
   \draw[bend left=45] (a5) to (a9);
\end{tikzpicture}
\caption{The matching $\mu$ with new closer labels below. }
\label{fig:mu}
\end{figure}
Reversing the labels of the closers, namely the vertex set of $\{  6, 7, 8, 9, 10\}$ seen on the lower labels of Figure~\ref{fig:mu}, we get the image shown in Figure~\ref{fig:nu}. Note that $L(\mu) = (4;6,4,1)$ whereas $L(\nu)=(6,4,1;4)$.
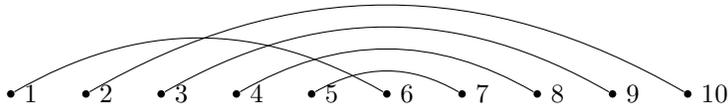
\begin{figure}
\figurefontsize\centering
\begin{tikzpicture}
   \foreach \i in {1,...,10} {
      \node[pnt] (a\i) at (\i, 0) {};
      \node[right] at (a\i.east) {$\i$};
   }
   \draw[bend left=30] (a1) to (a6);
   \draw[bend left=30] (a2) to (a10);
   \draw[bend left=30] (a3) to (a9);
   \draw[bend left=30] (a4) to (a8);
   \draw[bend left=30] (a5) to (a7);
\end{tikzpicture}
\caption{The new matching $\nu$,  the image of  $\mu$.}
\label{fig:nu}
\end{figure}

\subsection{Extension of the Subclass}

The restriction to Type OC can be lifted to indecomposable matchings with opener--closer--opener--closer configuration as in Figure~\ref{fig:extend}, called \emph{Type OCOC} in short. 

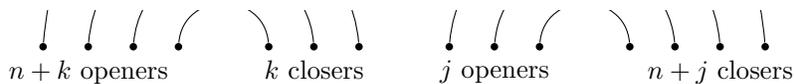
\begin{figure}
\figurefontsize\centering
\begin{tikzpicture}[scale=0.6]
   \foreach \i in {1,2,3,4,6,7,8,10,11,12,14,15,16,17}
      \node[pnt] (a\i) at (\i, 0) {};
   \node[below] at (a2.south) {$n+k$ openers};
   \node[below] at (a7.south) {$k$ closers};
   \node[below] at (a11.south) {$j$ openers};
   \node[below] at (a16.south) {$n+j$ closers};
   \clip (1,0) rectangle (17,0.8);
   \foreach \i/\j in {a1/a17, a2/a8, a3/a7, a4/a6, a10/a16, a11/a15, a12/a14}
      \draw[bend left=85, looseness=1.5] (\i) to (\j);
\end{tikzpicture}
\caption{The arc diagram of a Type OCOC indecomposable matching}
\label{fig:extend}
\end{figure}

\begin{algorithm}[Triple Reverse]
Take  a matching of Type OCOC as shown in Figure~\ref{fig:extend}. 
\begin{enumerate}[label=Step \arabic*,leftmargin=*]
\item\label{PTR1} First reverse labelling: Reverse the labels of the first consecutive block of $(n+k)$ openers by writing the new labels under the original vertex labels in the arc diagram.

\item\label{PTR2} Second reverse labelling: Reverse the labels of the second (the last) block of $(n+j)$ closers by writing the new labels under the original vertex labels in the arc diagram.
 
\item\label{PTR3} Update the situation: Draw the resulting matching using the new labels under the original vertex labels after the first two reversals.

\item\label{PTR4} Third reverse labelling: Reverse the labels of the openers (or closers) of $n$ connecting arcs, then draw the final arc diagram to complete the switching of nesting and crossing numbers.

The reason for the third reversal is that when two reversals of vertex labelling  have been done, once to the first indecomposable sub-block and once to the second indecomposable sub-block producing the switching of all crossing and nesting numbers for these two sub-blocks, the connecting arcs ($n$ of these) from one indecomposable sub-block to the other were reversed two times, resulting in the same nesting and crossing numbers for these connecting arcs (just inside out). Since the important relationship between these $n$ connecting arcs and each indecomposable sub-block is the placement of $n$  openers of the first sub-block and  $n$ closers of the last sub-block where the placement is already determined by the first two reversals of the procedure, the only step left is to reverse the labels of $n$ closers (or openers, but not both) of the connecting arcs.

\end{enumerate}
\end{algorithm}

We apply \emph{Procedure Triple Reverse} to the matching
 given in Figure~\ref{fig:extriple}.
\begin{figure}
\figurefontsize\centering
\begin{tikzpicture}[scale=0.6]
   \foreach \i in {1,2,3,4, 5,6,7,8, 9,10, 11,12,13,14,15, 16, 17, 18} {
      \node[pnt] (a\i) at (\i, 0) {};
      \node[below] at (a\i.south) {$\i$};
   }
   \foreach \i/\j in {a1/a10, a2/a8, a6/a7, a4/a9, a12/a14, a11/a15, a13/a17}
      \draw[bend left=40] (\i) to (\j);
   \foreach\i/\j in {a3/a16, a5/a18}
      \draw[bend left=45, cnt] (\i) to (\j);
\end{tikzpicture}
\caption{An example with two indecomposable matchings connected by $2$ crossing arcs}
\label{fig:extriple}
\end{figure}
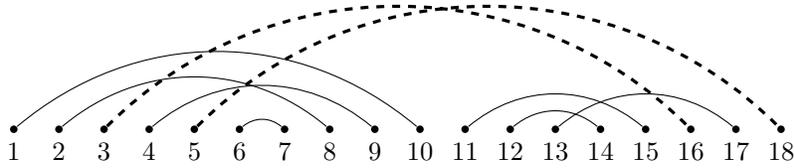
\ref{PTR1} and \ref{PTR2} of Procedure Triple Reverse  reverse the labels of  the first block of openers and the last block of closers as shown in Figure~\ref{fig:extriple1}.

\begin{figure}
\figurefontsize\centering
\begin{tikzpicture}[scale=0.65]
   \foreach \i in {1,2,3,4, 5,6,7,8, 9,10, 11,12,13,14,15, 16, 17, 18} {
      \node[pnt] (a\i) at (\i, 0) {};
      \node[right, inner sep=0, outer sep=0] at (a\i.east) {$\i$};
   }
   \foreach \i [evaluate=\i as \j using int(7-\i)] in {1,...,6}
      \node[below] at (a\i.south) {$\j$};
   \foreach \i [evaluate=\i as \j using int(32-\i)] in {14,...,18}
      \node[below] at (a\i.south) {$\j$};
   \foreach \i/\j in {a1/a10, a2/a8, a6/a7, a4/a9, a12/a14, a11/a15, a13/a17}
      \draw[bend left=40] (\i) to (\j);
   \foreach\i/\j in {a3/a16, a5/a18}
      \draw[bend left=45, cnt] (\i) to (\j);
\end{tikzpicture}
\caption{An example with two indecomposable matchings connected by $2$ crossing arcs with the first two reversals of labels done below the original labels}
\label{fig:extriple1}
\end{figure}
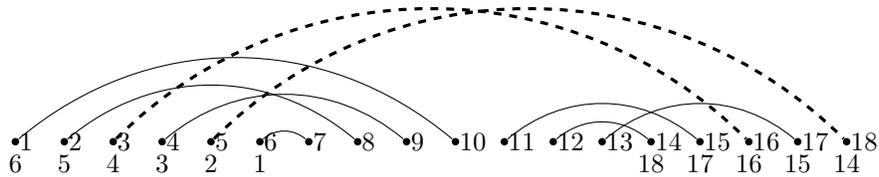

\ref{PTR3} updates by redrawing the diagram of Figure~\ref{fig:extriple1} with the new labels in increasing order from the left to the right to produce Figure~\ref{fig:extriple2}.
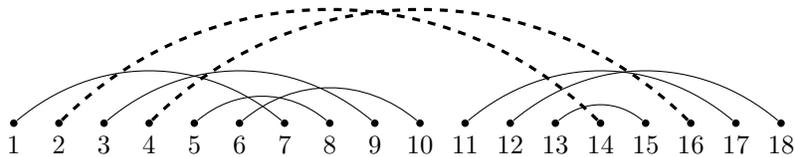
\begin{figure}
\figurefontsize\centering
\begin{tikzpicture}[scale=0.6]
   \foreach \i in {1,2,3,4, 5,6,7,8, 9,10, 11,12,13,14,15, 16, 17, 18} {
      \node[pnt] (a\i) at (\i, 0) {};
      \node[below] at (a\i.south) {$\i$};
   }
   \foreach \i/\j in {a1/a7, a5/a8, a6/a10, a3/a9, a12/a18, a11/a17, a13/a15}
      \draw[bend left=40] (\i) to (\j);
   \foreach\i/\j in {a4/a16, a2/a14}
      \draw[bend left=45, cnt] (\i) to (\j);
\end{tikzpicture}
\caption{Redrawing of the new matching after two reversals of labels}
\label{fig:extriple2}
\end{figure}
Notice that we need~\ref{PTR4},  the third reversal, for the connecting arcs to complete the bijection. In this case, we simply switch $14$ and $16$ to get the final diagram, Figure~\ref{fig:extriple3}.

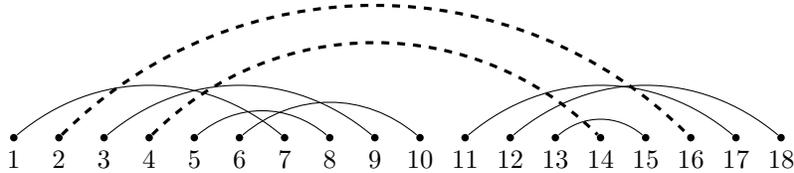
\begin{figure}
\figurefontsize\centering
\begin{tikzpicture}[scale=0.6]
   \foreach \i in {1,2,3,4, 5,6,7,8, 9,10, 11,12,13,14,15, 16, 17, 18} {
      \node[pnt] (a\i) at (\i, 0) {};
      \node[below] at (a\i.south) {$\i$};
   }
   \foreach \i/\j in {a1/a7, a5/a8, a6/a10, a3/a9, a12/a18, a11/a17, a13/a15}
      \draw[bend left=40] (\i) to (\j);
   \foreach\i/\j in {a4/a14, a2/a16}
      \draw[bend left=45, cnt] (\i) to (\j);
\end{tikzpicture}
\caption{Procedure Triple Reverse completed}
\label{fig:extriple3}
\end{figure}

The reader is encouraged to check crossing and nesting numbers for each indecomposable sub-interval (connected by solid arcs) and the entire matching (including dashed arcs) to see that all refinements of crossing and nesting numbers are switched between Figure~\ref{fig:extriple} and Figure~\ref{fig:extriple3}.

Seeing how connecting arcs over two indecomposable sub-blocks of opener--closer configuration can be treated by three label reversals, we may wish to construct recursively larger structures via more connecting arcs. Unfortunately, this tantalizing idea cannot be extended to larger structures with  broader connecting arcs because sub-indecomposable intervals entirely nested under the broad connecting arcs will remain nested under the same arcs after all label reversals, affording no opportunity for a switching of nesting and crossing numbers for these internal (relative to the broader arcs) sub-indecomposable intervals.

Let us summarize the consequence of Procedure Triple Reverse with the following theorem.

\begin{theorem}\label{thm:triple}
Given a Type OCOC matching $\mu \in \mathfrak{M}_{2n}$ such that
\[
	L(\mu) = (n_2, n_3, \dots, n_i; c_2, c_3, \dots, c_j),
\]
 Procedure Triple Reverse  is an involution mapping $\mu \in \mathfrak{M}_{2n}$ to $\nu \in \mathfrak{M}_{2n}$ where the label of $\nu$ is
\[
	L(\nu) = ( c_2, c_3, \dots, c_j;n_2, n_3, \dots, n_i).
\]
\end{theorem}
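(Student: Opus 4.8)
\emph{Strategy: reduce to three Type OC subsystems.} I would first split the arcs of a Type OCOC matching into the three natural classes suggested by Figure~\ref{fig:extend}: the \emph{first-block} arcs (opener and closer both in the first $O$--$C$ pair of blocks), the \emph{last-block} arcs (both endpoints in the second $O$--$C$ pair), and the $n$ \emph{connecting} arcs (opener in the first opener block, closer in the last closer block). The key structural observation is that every first-block arc lies entirely to the left of every last-block arc, so no two such arcs cross or nest. Since $k$ pairwise-crossing arcs automatically form a $k$-crossing, and likewise for nestings, the number of $k$-crossings (resp.\ $k$-nestings) equals the number of $k$-cliques in the crossing (resp.\ nesting) graph, and by the separation just noted every such clique is confined to (first-block)$\cup$(connecting) or to (last-block)$\cup$(connecting). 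By inclusion--exclusion over these two overlapping families, $L(\mu)$ is completely determined by the labels of the three sub-matchings obtained by restricting to first-block$\cup$connecting arcs, to last-block$\cup$connecting arcs, and to the connecting arcs alone; each of these is a \emph{Type OC} matching. Hence it suffices to prove that the procedure swaps the label of each of these three subsystems, and the theorem then follows by combining them through the same inclusion--exclusion for $\nu$.

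\emph{The clean subsystems.} The connecting arcs have their openers reversed by \ref{PTR1} and their closers reversed twice (once by \ref{PTR2}, once more by \ref{PTR4}), hence unchanged; the net effect on this subsystem is a single reversal of openers, so Theorem~\ref{thm:matching} swaps its label. Choosing \ref{PTR4} to reverse the \emph{openers} of the connecting arcs, the last-block$\cup$connecting subsystem is acted on cleanly as well: all of its closers sit in the final closer block, which is reversed exactly once (by \ref{PTR2}), while its openers are fixed up to the connecting-opener double reversal, so the net effect is a single reversal of closers, and Theorem~\ref{thm:matching} again applies.

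\emph{The obstacle.} The remaining subsystem---first-block$\cup$connecting---is the hard part, because the induced relabelling is \emph{not} a single opener- or closer-reversal (first-block openers are reversed once, connecting openers are reversed-then-reversed, and connecting closers are reversed once). Consequently individual crossing/nesting relations are not uniformly interchanged, yet the counts of $k$-element crossing and nesting families must still be swapped; I have checked in several examples that this indeed occurs. The structural fact I would exploit is the block constraint: every first-block arc closes strictly to the left of every connecting-arc closer, so in the associated permutation the first-block arcs carry the smallest closer-ranks and the connecting arcs the largest. Within each of the two parts the map \emph{is} a genuine reversal (so Theorem~\ref{thm:matching} applies internally), and a mixed first-block--connecting pair crosses or nests \emph{solely according to opener order}, the closer order being forced by the constraint. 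I would build a length-preserving bijection between $k$-crossing families and $k$-nesting families of the image by decomposing each family along the first-block/connecting split and re-matching the interface using this opener-order description. Showing that this interface matching is simultaneously size-preserving for \emph{every} family size is the main difficulty; it is exactly the place where the argument would fail for broader connecting arcs that nest over an entire sub-block, which is the limitation remarked upon after the procedure.

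\emph{Involution and conclusion.} Finally, the map is its own inverse: \ref{PTR1}, \ref{PTR2} are reversals of disjoint blocks of labels and \ref{PTR4} is a reversal whose square is the identity, so reapplying Procedure Triple Reverse to $\nu$ recovers $\mu$. Granting the three subsystem swaps, the inclusion--exclusion of the first paragraph yields, for every $r$, that the number of $r$-crossings of $\nu$ equals the number of $r$-nestings of $\mu$ and vice versa, i.e.\ $L(\nu)=(c_2,c_3,\dots,c_j;n_2,n_3,\dots,n_i)$, as claimed.
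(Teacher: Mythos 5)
Your reduction is sound, and it is essentially the paper's own strategy made precise: the paper likewise removes the connecting arcs, applies Theorem~\ref{thm:matching} to the two Type OC sub-blocks, and asserts that the third reversal ``completes the switching.'' Your three overlapping Type OC subsystems together with the clique-count inclusion--exclusion is exactly the right bookkeeping, and your treatment of the two clean subsystems (connecting arcs alone, and the mixed subsystem whose induced relabelling collapses to a single reversal) is correct. But the step you flag as unresolved is a genuine gap, and it is where all of the content of the theorem lives. On the remaining interface the pairwise crossing/nesting relations really are \emph{not} uniformly flipped (already in the paper's own example the pair $\{(13,17),(3,16)\}$ crosses in $\mu$ and its image $\{(13,15),(4,14)\}$ still crosses in $\nu$), and one can check there is no arc correspondence carrying the crossing graph of $\mu$ onto the nesting graph of $\nu$ --- only the clique counts agree. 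So no pair-by-pair argument, and no application of Theorem~\ref{thm:matching} to a fixed relabelling of that subsystem, can close the gap; what is required is precisely the simultaneous, all-$k$ size-preserving ``interface matching'' that you describe but do not construct. An appeal to checked examples cannot stand in for it.

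The missing step is not a formality, because once $n\ge 3$ the two options offered in \ref{PTR4} are no longer equivalent, so the argument must also decide \emph{which} endpoint to reverse. For $\mu=\{(1,10),(2,7),(3,9),(4,5),(6,8)\}$ (Type OCOC with $n=3$, $k=j=1$) one has $L(\mu)=(7,3;2)$; carrying out \ref{PTR1}--\ref{PTR3} and then reversing the connecting \emph{openers} in \ref{PTR4} gives $\{(1,5),(2,7),(3,10),(4,8),(6,9)\}$ with label $(2;7,3)$ as required, whereas reversing the connecting \emph{closers} gives $\{(1,5),(2,8),(3,7),(4,10),(6,9)\}$, which has the correct $2$-crossing and $2$-nesting counts but only two $3$-crossings instead of three. (By left--right reflection there are instances where only the closer choice works.) This shows concretely that the interface behaviour for mixed $k$-cliques with $k\ge 3$ is where the difficulty sits; the paper's own proof does not resolve it either, since its assertion that the connecting-arc/sub-block relationship ``is already determined by the first two reversals'' accounts for the $2$-crossing and $2$-nesting counts but not for the higher refinements. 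To complete your argument you would need both to specify and justify the choice in \ref{PTR4} and to supply the clique-size-preserving bijection at the hard interface.
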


\begin{proof}
Remove connecting arcs from a given matching of Type OCOC, then what remains consists of two indecomposable Type OC matchings each of which needs one reverse labelling to switch all refinements of nesting and crossing numbers by Theorem ~\ref{thm:matching}. Keeping all original labels, Procedure Triple Reverse performs the first two reversals of labels to achieve switch of crossing and nesting statistics for each sub-indecomposable interval while holding the places for the vertices of connecting arcs. The last reverse labelling for the closers (or openers) of the connecting arcs completes the switching of all statistics.
\end{proof}

\section{Set Partitions}\label{sec:set partitions}

\subsection{Definitions and Terminology}

A standard representation of a set partition on $[n]$ consists of $n$ vertices written in a line increasingly labelled from the left and only upper arcs joining elements of the same block. For example,
Figure~\ref{fig:partition} shows the diagram of a partition of $\{1, \dots, 9\}$.

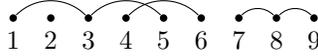
\begin{figure}
\figurefontsize\centering
\begin{tikzpicture}[scale=0.5]
   \foreach \i in {1,...,9}
        \node[pnt,label=below:$\i$] at (\i,0)(\i) {};
   \draw(1)  to [bend left=45] (3);
   \draw(3)  to [bend left=45] (5);
   \draw(4)  to [bend left=45] (6);
   \draw(7)  to [bend left=45] (8);
   \draw(8)  to [bend left=45] (9);
\end{tikzpicture}
\caption{The arc diagram representation of the partition $\{1,3,5 \}\{2\}\{4,6\}\{7,8,9\}$. }
\label{fig:partition}
\end{figure}

The vertices of a partition diagram can be of four types:
\emph{fixed points}, \emph{openers}, \emph{closers} and
\emph{transitories}. A fixed point has no incident edges, an opener
has degree one and is the left end-point of an arc, a closer has
degree one and is the right end-point of an arc, and a transitory
vertex has degree two and is the right end-point of one arc and the left
end-point of another.  In Figure~\ref{fig:partition}, the vertex
labelled $2$ is a fixed point; the vertices labelled $1$, $4$, and $7$
are openers; vertices $5$, $6$, and $9$ are closers; and $3$ and $8$
are transitory.

For normal nesting and crossing (as opposed to \emph{enhanced} nesting and crossing defined in subsection~\ref{ss:def}), fixed points can be disregarded because they do not contribute to any nesting or crossing. A transitory vertex $v$ can be redrawn as two vertices $v'$ and $v''$ such that $v'$ is a closer and $v''$ is an opener with $v' < v''$. This is a standard procedure called \emph{inflation}: splitting one vertex into two for the benefit of turning set partitions into involutions. Since fixed points do not matter, we obtain an analogous result to Theorem~\ref{thm:matching} and Procedure Triple Reverse for set partitions as well. The definitions for Type OC and Type OCOC set partitions are similarly extended from matchings.

\subsection{Result of Procedure Triple Reverse}

For Theorem~\ref{thm:partition}, since Type OC necessarily excludes transitory vertices, such set partitions are just involutions or partial matchings.

\begin{theorem}
\label{thm:partition}
Given a Type OC set partition $\pi \in \mathfrak{P}_n$, that is,  a set partition on $[n]$ with a block of consecutive openers followed by a block of consecutive closers in an indecomposable interval, with label
\[
	L(\pi) = (n_2, n_3, \dots, n_i; c_2, c_3, \dots, c_j),
\]
 there is an involution mapping $\pi \in \mathfrak{P}_n$ to a Type OC set partition, $\rho \in \mathfrak{P}_n$ where 
\[
	L(\rho) = ( c_2, c_3, \dots, c_j;n_2, n_3, \dots, n_i).
\]
\end{theorem}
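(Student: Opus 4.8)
The plan is to reduce Theorem~\ref{thm:partition} to the already-established Theorem~\ref{thm:matching} by stripping away the two features that distinguish a set partition from a matching, namely transitory vertices and fixed points. First I would observe that the Type OC hypothesis eliminates transitory vertices outright: a transitory vertex is simultaneously the right endpoint of one arc and the left endpoint of another, hence at once a closer and an opener, but in a Type OC interval every opener precedes every closer in the linear order, so no vertex can play both roles. Thus a Type OC set partition in $\mathfrak{P}_n$ consists only of openers, closers, and fixed points, and its arcs form a partial matching in which all arc-openers precede all arc-closers.

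Next I would discard the fixed points. Since a fixed point carries no incident arc, it participates in no $k$-nesting and no $k$-crossing, so deleting all fixed points leaves the label $L(\pi)$ unchanged while producing a genuine Type OC perfect matching $\mu$ on the remaining vertices. Theorem~\ref{thm:matching} then applies verbatim: reversing the closer labels of $\mu$ yields a Type OC matching $\nu$ with $L(\nu) = (c_2, c_3, \dots, c_j; n_2, n_3, \dots, n_i)$, and the assignment $\mu \mapsto \nu$ is an involution.

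Finally I would reinsert each fixed point at its original vertex label and declare the result $\rho \in \mathfrak{P}_n$. Because the closer-reversal acts only on the arc-closers and leaves fixed points untouched, reapplying the whole procedure reverses those labels a second time and restores $\pi$, so the map is an involution on $\mathfrak{P}_n$. The fixed points still contribute nothing to any nesting or crossing, so $L(\rho) = L(\nu) = (c_2, c_3, \dots, c_j; n_2, n_3, \dots, n_i)$ as required, and $\rho$ remains Type OC since reversing labels among the closers permutes closers among themselves and hence preserves the openers-then-closers configuration.

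The only real obstacle is the bookkeeping around the fixed points: one must confirm that the rule of holding each fixed point at its original label is compatible both with the Type OC structure of $\rho$ and with the involution property, and that the reversal is carried out among the arc-closers alone rather than over the whole closer region. Once this is pinned down, the statement follows immediately from Theorem~\ref{thm:matching}, with no new combinatorial content beyond the observation that fixed points, and the absent transitory vertices, are inert for the nesting and crossing statistics.
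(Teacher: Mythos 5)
Your proposal is correct and follows essentially the same route as the paper: the paper likewise observes that Type OC excludes transitory vertices (so the partition is just a partial matching), disregards fixed points as inert for nesting and crossing statistics, and reduces to a single reverse labelling of the closers as in Theorem~\ref{thm:matching}. Your extra bookkeeping about reinserting fixed points at their original labels matches the paper's convention that fixed points keep their labels and are otherwise ignored.
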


\begin{proof}
Perform a single reverse labelling to the closers. This switches all refinements of nesting and crossing numbers as in the proofs for Theorem ~\ref{thm:matching}.
\end{proof}

To check whether Procedure Triple Reverse applies to a particular set partition, first perform inflation to the transitory vertices of the set partition and check if it is of Type OCOC. Note that Type OCOC forces the set partition to have at most one transitory vertex. In such a case, the transitory vertex $v$ after inflation produces $v'$ (a closer) and $v''$ (an opener) in that order. See Figure~\ref{fig:trans}. The Procedure's three relabelling steps never alter $v'$ or $v''$; thus they still remain next to each other as closer--opener for deflation. To get the set partition back, one deflates by identifying $v'$ and $v''$ to $v$. All fixed points keep their labels, otherwise  ignored for the purpose of the procedure.

\begin{figure}
\figurefontsize\centering
\begin{tikzpicture}[scale=0.6]
   \foreach \i in {1,2,3,4,6,7, 9,10,12,13,15,16,17, 18}
      \node[pnt] (a\i) at (\i, 0) {};
   \node[below] at (a2.south) {$n+k$ ops};
   \node[below] at (a7.south) {$k-1$ cls};
   \node[below] at (a9.south) {$v'$};
   \node[below] at (a10.south) {$v''$};
   \node[below] at (a13.south) {$j-1$ ops};
   \node[below] at (a17.south) {$n+j$ cls};
   \clip (1,0) rectangle (18,0.8);
   \foreach \i/\j in {a1/a18, a2/a9, a3/a7, a4/a6, a10/a17, a12/a16, a13/a15}
      \draw[bend left=85, looseness=1.5] (\i) to (\j);
\end{tikzpicture}
\caption{Inflated transitory vertex $v'$ and $v''$}
\label{fig:trans}
\end{figure}
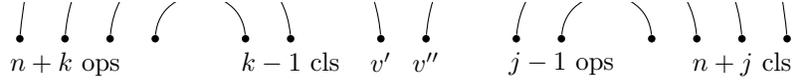

We summarize the result of Procedure Triple Reverse in the following theorem. 
\begin{theorem}
\label{thm:partitiontriple}
Given a Type OCOC set partition $\pi \in \mathfrak{P}_n$ on $[n]$ with label 
\[
	L(\pi) = (n_2, n_3, \dots, n_i; c_2, c_3, \dots, c_j),
\]
Procedure Triple Reverse is an involution mapping $\pi \in \mathfrak{P}_n$ to a Type OCOC set partition $\rho \in \mathfrak{P}_n$ where 
\[
	L(\rho) = ( c_2, c_3, \dots, c_j;n_2, n_3, \dots, n_i).
\]
\end{theorem}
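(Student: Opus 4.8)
The plan is to reduce Theorem~\ref{thm:partitiontriple} to its matching counterpart, Theorem~\ref{thm:triple}, through the inflation correspondence, exactly as Theorem~\ref{thm:partition} was reduced to Theorem~\ref{thm:matching}. Given a Type OCOC set partition $\pi \in \mathfrak{P}_n$, I would first set aside the fixed points, which under normal crossing and nesting lie on no $k$-crossing or $k$-nesting and therefore do not affect $L(\pi)$. As noted immediately before the statement, Type OCOC admits at most one transitory vertex $v$, located at the seam between the first closer-block and the second opener-block; inflating it yields an adjacent closer--opener pair $v' < v''$ as in Figure~\ref{fig:trans}. The resulting partial matching is again of Type OCOC (cf.\ Figure~\ref{fig:extend}), so Procedure Triple Reverse applies to it directly.

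I would then carry out the following steps in order. First, verify that inflation preserves the label, so that the inflated matching has the same $L$ as $\pi$; the key point is that $v'$ and $v''$ are inserted adjacently, with no endpoint of any other arc between them, so the left-to-right order of all arc endpoints relative to one another is unchanged and each $k$-crossing and $k$-nesting of $\pi$ corresponds to exactly one in the inflated matching. Second, apply Procedure Triple Reverse and invoke Theorem~\ref{thm:triple} to conclude that all refinements of crossing and nesting numbers are interchanged, turning $L = (n_2, \dots, n_i; c_2, \dots, c_j)$ into $(c_2, \dots, c_j; n_2, \dots, n_i)$. Third, deflate by identifying $v'$ and $v''$ back into a single transitory vertex, and restore the fixed points with their original labels, producing the Type OCOC set partition $\rho$ with the asserted label.

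For the involution claim I would note that inflation and deflation are mutually inverse bijections on the relevant class, and that Procedure Triple Reverse is already an involution on Type OCOC matchings by Theorem~\ref{thm:triple}; hence the composite ``inflate, Triple Reverse, deflate'' is an involution on Type OCOC set partitions.

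The main obstacle I anticipate is showing that the pair $v'$, $v''$ survives all three reversals as an adjacent closer--opener pair, since this is needed both for deflation to be well defined and for the composite to be an involution. To settle it I would track precisely which positions each reversal moves: Step~1 permutes only labels within the first opener-block (all smaller than the labels of $v'$ and $v''$), Step~2 permutes only labels within the last closer-block (all larger), and the final reversal touches only the endpoints of the connecting arcs, of which $v'$ and $v''$ are not members. Consequently no relabelled vertex can land between $v'$ and $v''$, and after the redrawing of Step~3 they remain adjacent in the closer--opener order required for deflation. Making this position-tracking airtight is the one place where genuine care is required; the rest follows formally from Theorem~\ref{thm:triple}.
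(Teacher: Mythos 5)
Your proposal is correct and follows essentially the same route as the paper: inflate the (at most one) transitory at the seam into an adjacent closer--opener pair $v'$, $v''$, apply Procedure Triple Reverse via Theorem~\ref{thm:triple}, and deflate, using the observation that none of the three relabelling steps touches $v'$ or $v''$. Your position-tracking argument for why $v'$ and $v''$ survive as an adjacent closer--opener pair is exactly the point the paper makes in the discussion preceding the theorem, just spelled out in more detail.
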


\begin{proof}
Inflate transitory vertices if any. If the set partition is still Type OCOC, perform Procedure Triple Reverse then deflate the transitory if present. The result is the image of the set partition.
\end{proof}

\section{Permutations}
\subsection{Definitions and Terminology}
\label{ss:def}
The \emph{arc annotated sequence} associated with a permutation $\sigma \in \mathfrak{S}_n$ is the directed graph on the vertex set $V(\sigma) = \{ 1, 2, \dots, n\}$ with arc set
 $A(\sigma) = \{ (a, \sigma(a)) : 1 \le a \le n \}$ drawn in such a way that the edge $(a, \sigma(a))$ is an upper arc if $a \le \sigma(a)$; otherwise, $(a, \sigma(a))$ is a lower arc when $a > \sigma(a)$. This way of drawing makes the direction of each arc unique, thus the omission of arrows on arcs. Figure~\ref{fig:perm} shows a permutation with its arc diagram.
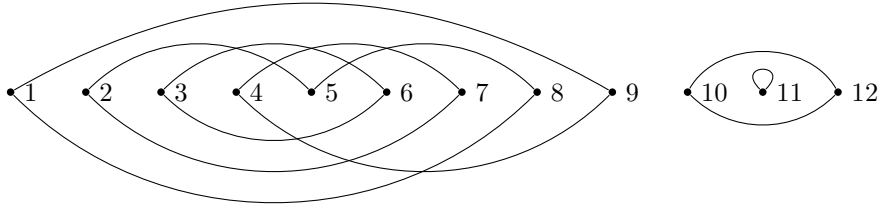
\begin{figure}
\figurefontsize\centering
\begin{tikzpicture}
   \foreach \i in {1,...,12} {
      \node[pnt] (a\i) at (\i, 0) {};
      \node[right] at (a\i.east) {$\i$};
   }
   \draw[bend left=30] (a1) to (a9);
   \draw[bend left=45] (a2) to (a5);
   \draw[bend left=45] (a3) to (a6);
   \draw[bend left=45] (a4) to (a7);
   \draw[bend left=45] (a5) to (a8);
   \draw[bend left=60] (a10) to (a12);
   \draw[loop above] (a11) to ();
   \draw[bend right=45] (a1) to (a8);
   \draw[bend right=45] (a2) to (a7);
   \draw[bend right=45] (a3) to (a6);
   \draw[bend right=45] (a4) to (a9);
   \draw[bend right=45] (a10) to (a12);
\end{tikzpicture}
\caption{The arc diagram of the permutation $\sigma=(1,9,4,7,2,5,8)(3,6)(10,12)(11)$}
\label{fig:perm}
\end{figure}

Since the arc diagram of a permutation is composed of upper and lower arc diagrams, we may inherit the terminology of set partitions for the four types of vertices and the concepts of  $k$-nesting and $k$-crossing. Figure~\ref{fig:perm} has all four types of vertices: Vertex $\{11\}$ is a fixed point; vertices $\{1, 2, 3, 4, 10\}$ are openers; $\{6, 7, 8, 9, 12\}$ are closers; and vertex $\{5\}$ is an upper transitory.
Figure~\ref{fig:permup} shows the upper arc diagram, and Figure~\ref{fig:permlower} shows the lower arc diagram of the permutation from Figure~\ref{fig:perm} drawn as upper arcs.
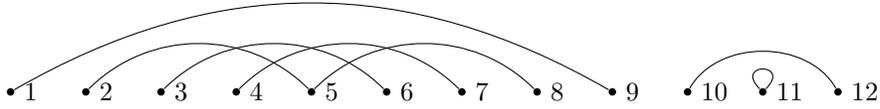
\begin{figure}
\figurefontsize\centering
\begin{tikzpicture}
   \foreach \i in {1,...,12} {
      \node[pnt] (a\i) at (\i, 0) {};
      \node[right] at (a\i.east) {$\i$};
   }
   \draw[bend left=30] (a1) to (a9);
   \draw[bend left=45] (a2) to (a5);
   \draw[bend left=45] (a3) to (a6);
   \draw[bend left=45] (a4) to (a7);
   \draw[bend left=45] (a5) to (a8);
   \draw[bend left=60] (a10) to (a12);
   \draw[loop above] (a11) to ();
\end{tikzpicture}
\caption{The upper arc diagram of the permutation $\sigma=(1,9,4,7,2,5,8)(3,6)(10,12)(11)$}
\label{fig:permup}
\end{figure}
\begin{figure}
\figurefontsize\centering
\begin{tikzpicture}
   \foreach \i in {1,...,12} {
      \node[pnt] (a\i) at (\i, 0) {};
      \node[right] at (a\i.east) {$\i$};
   }
   \draw[bend left=30] (a1) to (a8);
   \draw[bend left=30] (a2) to (a7);
   \draw[bend left=30] (a3) to (a6);
   \draw[bend left=30] (a4) to (a9);
   \draw[bend left=30] (a10) to (a12);
\end{tikzpicture}
\caption{The lower arc diagram of the permutation $\sigma=(1,9,4,7,2,5,8)(3,6)(10,12)(11)$ drawn as an arc diagram}
\label{fig:permlower}
\end{figure}
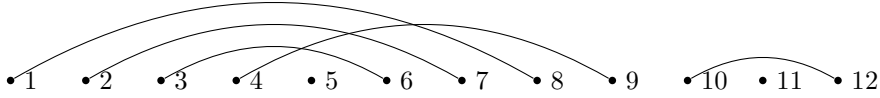

For permutations, we draw a fixed point  as an upper loop seen in Figure~\ref{fig:perm} for the singleton $\{11\}$. Thus, a fixed point contributes an opener followed by a closer.  We define an  \emph{enhanced} nesting for upper arcs as  a pair of arcs $(a, \sigma(a)), (b, \sigma(b))$ satisfying $a < b \le \sigma(b) < \sigma(a)$; normal nesting is used for lower arcs.
Similarly for upper arcs, we define an \emph{enhanced} crossing as a pair of arcs $(a, \sigma(a)), (b, \sigma(b))$ satisfying $a < b \le \sigma(a) < \sigma(b)$. Again, normal crossing is used for lower arc diagrams. The implication for an upper transitory is that inflation produces first  an opener then a closer (unlike the case with inflation of a transitory for set partitions). Doing so creates a crossing.  Figure~\ref{fig:e3nest} shows an  enhanced $3$-nesting and an enhanced $3$-crossing. In lower arc diagrams, however, fixed points have no arcs while lower transitories still inflate to a closer followed by an opener.

This slight dissymmetry to the treatment of upper and lower arcs in the definition is consistent with the literature, but is inconsequential. Corteel \cite{Corteel07} defined crossings and nestings this way because they represent better known permutation statistics like weak exceedances and some pattern avoidance. 
\begin{figure}
\figurefontsize\centering
\begin{tikzpicture}[scale=0.6]
   \node[pnt] at (0,0)(1){};
   \node[pnt] at (1,0)(2){};
   \node[pnt] at (2,0)(3){};
   \node[pnt] at (3,0)(4){};
   \node[pnt] at (4,0)(5){};
   \node[pnt] at (10,0)(1b){};
   \node[pnt] at (11,0)(2b){};
   \node[pnt] at (12,0)(3b){};
   \node[pnt] at (13,0)(4b){};
   \node[pnt] at (14,0)(5b){};
   \draw(1)  to [bend left=60] (5);
   \draw(2)  to [bend left=60] (4);
   \draw[loop above] (3) to ();
   \draw(1b)  to [bend left=45] (3b);
   \draw(2b)  to [bend left=45] (4b);
   \draw(3b)  to [bend left=45] (5b);
\end{tikzpicture}
\caption{An enhanced $3$-nesting and an enhanced $3$-crossing}
\label{fig:e3nest}
\end{figure}
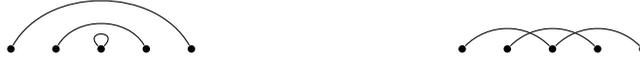

\subsection{Result of Procedure Triple Reverse}

Let $\cross(\sigma)$ be the maximal $i$ such that $\sigma$ has an $i$-crossing; similarly, let $\nest(\sigma)$ be the maximal $j$ such that $\sigma$ has a $j$-nesting. Let $S_n(i,j)$ be the number of permutations $\sigma \in \mathfrak{S}_n$ with $\cross(\sigma) = i$ and $\nest(\sigma) = j$. Then Burrill et al.\cite{BuMiPo10} proved that $S_n( i, j ) = S_n( j, i)$.

Besides maximal nesting and crossing numbers, a permutation $\sigma \in \mathfrak{S}_n$ has $nu_2$ upper $2$-nestings, $nu_3$ upper $3$-nestings, etc. and $cu_2$ upper $2$-crossings, $cu_3$ upper $3$-crossings and so on; similarly, $\sigma$ also has $nl_2$ lower $2$-nestings, $nl_3$ lower $3$-nestings, etc., $cl_2$ lower $2$-crossings and $cl_3$ lower $3$-crossings, etc. We define the \emph{label} of the permutation $\sigma$ to be
\begin{multline*}
   L(\sigma)
   = (nu_2, nu_3, \dots, nu_i;
      cu_2, cu_3, \dots, cu_j;   \\
      nl_2, nl_3, \dots, nl_k;
      cl_2, cl_3, \dots, cl_l)   ,
\end{multline*}
where $\nest(\sigma) =\max (i, k)$, and  $\cross(\sigma)=\max(j, l)$.
We similarly extend the definitions of Type OC and Type OCOC to permutations.

We are ready to state a similar result to Theorem~\ref{thm:matching} and Theorem~\ref{thm:partition} for permutations. Here we adapt the style of phrasing from \cite{Chetal07} to place emphasis on symmetric joint distribution instead of the involution.

\begin{theorem}
\label{thm:permutation}
Given a Type OC permutation $\sigma \in \mathfrak{S}_n$ with an arc diagram consisting of openers followed by closers for each indecomposable interval, the number of permutations $\sigma \in \mathfrak{S}_n$ with nesting and crossing label
\begin{multline*}
   L(\sigma)
   = (nu_2, nu_3, \dots, nu_i;
      cu_2, cu_3, \dots, cu_j; \\
      nl_2, nl_3, \dots, nl_k;
      cl_2, cl_3, \dots, cl_l)
\end{multline*}
equals the number of such  permutations with label
\begin{multline*}
   L(\sigma)
   = ( cu_2, cu_3, \dots, cu_j;
      nu_2, nu_3, \dots, nu_i;  \\
      cl_2, cl_3, \dots, cl_l;
      nl_2, nl_3, \dots, nl_k).
\end{multline*}
\end{theorem}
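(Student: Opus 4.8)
The plan is to reduce the permutation statement to two applications of Theorem~\ref{thm:matching}, one for the upper arc diagram and one for the lower arc diagram. First I would record the structural observation that drives everything: for a Type OC permutation every vertex is either an opener or a closer (transitories and fixed points being removed by inflation, as below), each opener is the left endpoint of exactly one upper arc and one lower arc, and each closer is the right endpoint of one upper arc and one lower arc. Hence the upper arc diagram is by itself a Type OC perfect matching $M_U$ between the openers and the closers, and the lower arc diagram is a second Type OC perfect matching $M_L$ on the \emph{same} opener/closer vertex set. Since upper arcs can nest or cross only with upper arcs and lower arcs only with lower arcs, the label of $\sigma$ splits cleanly: the pair $(nu_2,\dots;cu_2,\dots)$ is exactly the matching label $L(M_U)$, and $(nl_2,\dots;cl_2,\dots)$ is exactly $L(M_L)$.

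The key step is to apply the closer-reversal involution of Theorem~\ref{thm:matching} once, to the closer block of the full diagram. Because $M_U$ and $M_L$ share the very same closer vertices, a single reversal of the closer labels acts on both matchings simultaneously: it is a reflection of the closer block, so an upper arc from an opener to a closer is carried to an upper arc from the same opener to the reflected closer, and likewise for lower arcs. Thus openers remain openers, closers remain closers, upper arcs remain upper and lower arcs remain lower, and the image is again a Type OC permutation $\sigma'$. By Theorem~\ref{thm:matching} this one reversal interchanges nestings and crossings inside $M_U$ and, at the same time, inside $M_L$; in label terms it sends $L(M_U)$ from $(nu;cu)$ to $(cu;nu)$ and $L(M_L)$ from $(nl;cl)$ to $(cl;nl)$, which is precisely the swap asserted in the theorem.

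To conclude I would verify that the construction is a genuine involution on Type OC permutations. Recombining the two reversed matchings $M_U'$ and $M_L'$ yields a well-defined permutation because each is a perfect matching between openers and closers, so the rule ``opener $\mapsto$ its $M_U'$-partner, closer $\mapsto$ its $M_L'$-partner'' is a bijection of $[n]$; and reversing the closer labels a second time returns $\sigma$. An involution carrying the class with label $(nu;cu;nl;cl)$ onto the class with label $(cu;nu;cl;nl)$ shows the two classes are equinumerous, which is the statement phrased in the joint-distribution style of \cite{Chetal07}.

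The part that needs the most care is the interface between the two diagrams together with the book-keeping of inflation. Two points must be checked rather than asserted. First, that reversing the shared closers respects the enhanced definition used for upper arcs and the normal definition used for lower arcs at once; this is harmless in the pure Type OC case, because enhanced and normal statistics differ only when two arcs share an endpoint, i.e.\ at a transitory or fixed point, and there are none. Second, that transitories and fixed points are correctly removed and restored: as in the argument behind Theorem~\ref{thm:partition}, I would inflate each such vertex before reversing, using the permutation convention that an upper transitory or fixed point inflates to an opener immediately followed by a closer (whereas a lower transitory inflates to a closer followed by an opener), and then confirm that the reversal never separates an inflated pair, so that deflation afterwards returns a legitimate permutation. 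Keeping those inflated adjacencies intact under the reversal is the main obstacle; once it is secured, the decoupling of the upper and lower diagrams makes the remainder routine.
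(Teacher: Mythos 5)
Your proposal is correct and follows essentially the same route as the paper: a single reversal of the shared closer labels, acting simultaneously on the upper and lower arc diagrams (each a Type OC matching on the same opener/closer set), which by Theorem~\ref{thm:matching} swaps the upper and lower nesting/crossing refinements at once, with the same caveats about inflating an upper transitory or fixed point and the absence of lower transitories in Type OC. Your write-up merely makes explicit the decomposition and the involution check that the paper leaves implicit.
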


\begin{proof}
Use the involution that reverses the labels of all closers which may include that of a fixed point or an upper transitory. Redraw to get the image of the permutation. The involution thus implies symmetric joint distribution. Note that Type OC excludes the presence of lower transitories in the permutation.
\end{proof}

For enhanced nesting and crossing, Type OC forces the presence of at most one fixed point or upper  transitory  in the middle of an indecomposable interval because a fixed point or upper transitory is inflated to an opener--closer pair in that order.

 The main example in \cite{BuMiPo10} consists of two indecomposable intervals each satisfying the hypothesis of Theorem~\ref{thm:permutation}. We treat upper and lower diagrams separately. For the upper arc diagram of permutation $A_{\sigma}$ from Figure~\ref{fig:permup},
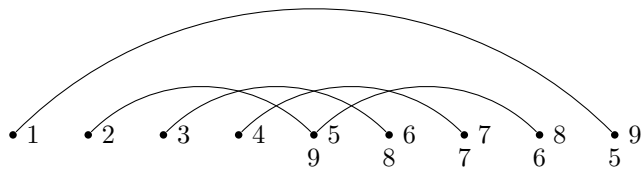
\begin{figure}
\figurefontsize\centering
\begin{tikzpicture}
   \foreach \i in {1,...,9} {
      \node[pnt] (a\i) at (\i, 0) {};
      \node[right] at (a\i.east) {$\i$};
   }
   \foreach \i [evaluate =\i as \j using int(14-\i)] in {5,...,9}
      \node[below] at (a\i.south) {$\j$};
   \draw[bend left=45] (a1) to (a9);
   \draw[bend left=45] (a2) to (a5);
   \draw[bend left=45] (a3) to (a6);
   \draw[bend left=45] (a4) to (a7);
   \draw[bend left=45] (a5) to (a8);
\end{tikzpicture}
\caption{The upper arc diagram of the first indecomposable interval  of 
$\sigma=(1,9,4,7,2,5,8)(3,6)(10,12)(11)$}
\label{fig:permup1}
\end{figure}
reverse the labels of the closers, namely the vertex set of $\{ 5, 6, 7, 8, 9\}$ seen on the lower labels of Figure~\ref{fig:permup1} to get the image shown in Figure~\ref{fig:permupimage1}.
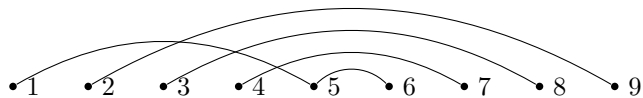
\begin{figure}
\figurefontsize\centering
\begin{tikzpicture}
   \foreach \i in {1,...,9} {
      \node[pnt] (a\i) at (\i, 0) {};
      \node[right] at (a\i.east) {$\i$};
   }
   \draw[bend left=30] (a1) to (a5);
   \draw[bend left=30] (a2) to (a9);
   \draw[bend left=30] (a3) to (a8);
   \draw[bend left=30] (a4) to (a7);
   \draw[bend left=45] (a5) to (a6);
\end{tikzpicture}
\caption{The image of  $(1,9,4,7,2,5,8)(3,6)$}
\label{fig:permupimage1}
\end{figure}
The accounting shows that Figure~\ref{fig:permup1} contains $4$ $2$-nestings and $1$ $4$-crossing, whereas Figure~\ref{fig:permupimage1} contains $4$ $2$-crossings (one of which is an enhanced crossing at $5$), and $1$ $4$-nesting. This is a different image from the one shown in \cite{BuMiPo10}.

To complete the involution, we show the result for the lower arc diagram of the first indecomposable interval of $\sigma$.
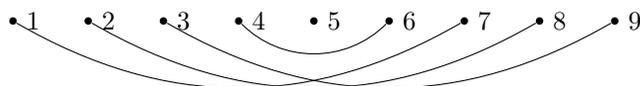
\begin{figure}
\figurefontsize\centering
\begin{tikzpicture}
   \foreach \i in {1,...,9} {
      \node[pnt] (a\i) at (\i, 0) {};
      \node[right] at (a\i.east) {$\i$};
   }
   \draw[bend right=30] (a1) to (a7);
   \draw[bend right=30] (a2) to (a8);
   \draw[bend right=30] (a3) to (a9);
   \draw[bend right=45] (a4) to (a6);
\end{tikzpicture}
\caption{The image of the lower arc diagram of the first part of $\sigma$}
\label{fig:permlowerimage1}
\end{figure}
Altogether, the permutation $\sigma$ is mapped to $\rho$ shown in Figure~\ref{fig:permimage}.
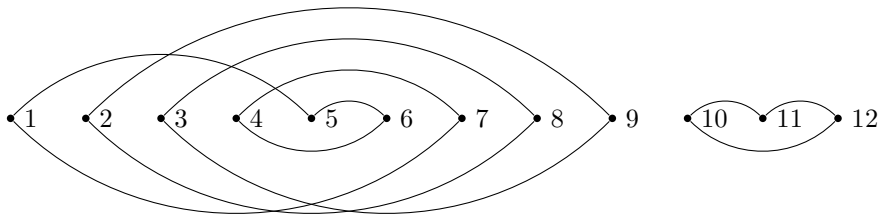
\begin{figure}
\figurefontsize\centering
\begin{tikzpicture}
   \foreach \i in {1,...,12} {
      \node[pnt] (a\i) at (\i, 0) {};
      \node[right] at (a\i.east) {$\i$};
   }
   \draw[bend left=45] (a1) to (a5);
   \draw[bend left=45] (a2) to (a9);
   \draw[bend left=45] (a3) to (a8);
   \draw[bend left=45] (a4) to (a7);
   \draw[bend left=45] (a5) to (a6);
   \draw[bend left=45] (a10) to (a11);
   \draw[bend left=45] (a11) to (a12);
   \draw[bend right=45] (a1) to (a7);
   \draw[bend right=45] (a2) to (a8);
   \draw[bend right=45] (a3) to (a9);
   \draw[bend right=45] (a4) to (a6);
   \draw[bend right=45] (a10) to (a12);
\end{tikzpicture}
\caption{The image of  $\sigma$}
\label{fig:permimage}
\end{figure}
Notice that all the refinements for crossing and nesting numbers are switched. To stitch upper and lower arc diagrams back together to get the image of the original permutation $\sigma$, simply identify vertices of the same label. This is possible because openers and closers are preserved with their original sets of labels.

In summary,  if a permutation is Type OC in each indecomposable interval, then the same procedure described for set partitions needs to be applied twice, once for the upper arcs, once for the lower arcs to find its image. One detail in stitching the result of upper arc diagram and lower arc diagram together is to keep fixed points and upper transitories as points without any arc for lower arc diagrams. Though they play no role in the reverse labelling of closers, holding their places in the diagram  aides  the putting together of both upper and lower diagrams.

Compare the image of $A_{\sigma}$ obtained from the procedure to $A_{\Psi(\sigma)}$ given by \cite{BuMiPo10} in Figure~\ref{fig:bmpimageperm} and note that refinements of crossing and nesting numbers are not interchanged in Figure~\ref{fig:bmpimageperm}.
\begin{figure}
\figurefontsize\centering
\begin{tikzpicture}
   \foreach \i in {1,...,12} {
      \node[pnt] (a\i) at (\i, 0) {};
      \node[right] at (a\i.east) {$\i$};
   }
   \draw[bend left=40] (a1) to (a8);
   \draw[bend left=40] (a2) to (a9);
   \draw[bend left=40] (a3) to (a7);
   \draw[bend left=60] (a4) to (a6);
   \draw[loop above] (a5) to ();
   \draw[bend left=45] (a10) to (a11);
   \draw[bend left=45] (a11) to (a12);
   \draw[bend right=45] (a1) to (a6);
   \draw[bend right=45] (a2) to (a7);
   \draw[bend right=45] (a3) to (a9);
   \draw[bend right=45] (a4) to (a8);
   \draw[bend right=45] (a10) to (a12);
\end{tikzpicture}
\caption{The arc diagram of $\Psi(\sigma)$ from \cite{BuMiPo10}}
\label{fig:bmpimageperm}
\end{figure}
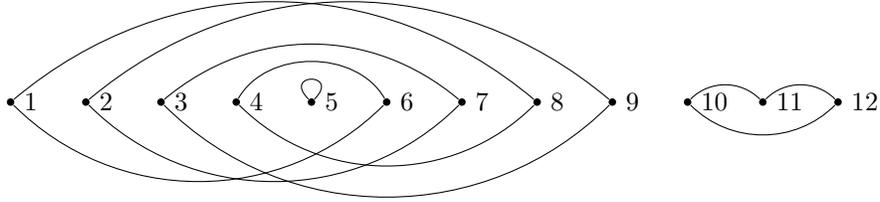

The extension from Type OC to Type OCOC works the same way for permutations as it does for matchings and set partitions. 
Note that inflation  for fixed points and transitories of upper arc diagrams is different from those of lower arc diagrams because of enhanced and non-enhanced distinction . Once applied, Procedure Triple Reverse cannot distinguish enhanced from non-enhanced statistics; thus the procedure switches all nesting and crossing numbers regardless of their types.

\section{Enumeration of Admissible Objects}
\subsection{Matchings}

The three types of indecomposable opener--closer sequences we have studied so far are:
\begin{enumerate}
\item A fixed point: no opener/closer, or an opener followed by a closer in the enhanced case after inflation. We call this \emph{Type P}.

\item A consecutive block of openers followed by a consecutive block of closers. We call this \emph{Type OC}.

\item The pattern described by Figure~\ref{fig:extend} treatable by Procedure Triple Reverse. This we call \emph{Type OCOC}.
\end{enumerate}

Procedure Triple Reverse applies to Type OCOC whereas one reverse labelling suffices for Type OC. A fixed point naturally requires no relabelling. For the sake of simplicity, we say Procedure Triple Reverse, or \emph{PTR} in short, to mean three reverse labellings or fewer.

Since these indecomposable types form building blocks of matchings we enumerate, we find their generating series first.

Let $f(x,y,z,p,s)$ denote the ordinary generating function for  partial matchings of the admissible type, that is, treatable by PTR,  where 
\begin{itemize}
\item $x$ marks closers in the first consecutive block,

\item $y$ marks openers in the second consecutive block,

\item $z$ marks connecting arcs from the first indecomposable sub-block to the second indecomposable sub-block,

\item $p$ marks  fixed points,

\item and $s$ marks the size of the partial matching.
\end{itemize}

\begin{proposition}
\label{prop:matchingGF}
\begin{enumerate}
\item\label{item:one:matchGF} Type P is generated by $ps$.

\item\label{item:two:matchGF} Type OC is generated by 
\[
O(z, p ,s)= \sum_{n \ge 1, l \ge 0} n! \binom{2n-2+l}{l} z^n p^l s^{2n+l}.
\]

\item\label{item:three:matchGF} Type OCOC is generated by 
\begin{multline*}
T(x, y ,z, p, s) = 
   \sum_{\substack{l \ge 0\\ n,j,k\ge 1}} 
      n! \binom{n+k}{k} k! \binom{n+j}{j} j! 
      \\
      \cdot
      \binom{2(n+k+j-1)+l}{l} 
      x^k y^j z^n p^l s^{2(k+j+n)+l}.
\end{multline*}
\end{enumerate}
\end{proposition}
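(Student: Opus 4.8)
The plan is to treat the three parts as independent direct enumerations, in each case factoring an admissible indecomposable diagram into its \emph{matching data} (how openers are joined to closers) and its \emph{fixed-point data} (where the isolated vertices sit), and then reading the weight off the marks. Part~\ref{item:one:matchGF} is immediate: a Type~P component is a single fixed point, contributing the mark $p$ and one vertex $s$, hence $ps$.

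For Part~\ref{item:two:matchGF} I would first count the pure Type~OC cores (no fixed points). A core has $n$ consecutive openers followed by $n$ consecutive closers, and a matching is any bijection from openers to closers, giving $n!$ choices, $n$ arcs (marked $z^n$), on $2n$ vertices (the factor $s^{2n}$). I would then observe that every such core is automatically indecomposable: at any point strictly between the first opener and the last closer at least one arc is open, since for a gap among the openers the arcs leaving the openers to its left are open, at the central gap all $n$ arcs are open, and for a gap among the closers the arcs entering the closers to its right are open. To insert the $l$ fixed points I note that one may be dropped into any of the $2n-1$ internal gaps of the core without destroying indecomposability (the arc spanning that gap still spans the enlarged gap), whereas a fixed point placed before the first or after the last core vertex would create an empty sweep and hence a decomposition. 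Distributing $l$ indistinguishable fixed points among the $2n-1$ admissible gaps is a stars-and-bars count giving $\binom{(2n-1)+l-1}{l}=\binom{2n-2+l}{l}$, with marks $p^l s^{l}$; multiplying the contributions and summing over $n\ge 1$, $l\ge 0$ yields $O(z,p,s)$.

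Part~\ref{item:three:matchGF} proceeds along the same two-step pattern, now with the richer core of Figure~\ref{fig:extend}. I would parameterise an admissible Type~OCOC core by $(n,k,j)$: the first block has $n+k$ openers, the middle contributes $k$ closers and then $j$ openers, and the last block has $n+j$ closers, for a total of $2(n+k+j)$ core vertices. The matching data then factors as: choose and match the $k$ first-block openers that close at the $k$ middle closers, $\binom{n+k}{k}\,k!$ ways; choose and match the $j$ last-block closers that receive the $j$ middle openers, $\binom{n+j}{j}\,j!$ ways; and join the remaining $n$ first-block openers to the remaining $n$ last-block closers by the connecting arcs, $n!$ ways. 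These carry the marks $x^k$, $y^j$, $z^n$ and $s^{2(n+k+j)}$. As in Part~\ref{item:two:matchGF}, every internal gap of the core is crossed by an arc---in particular the gap separating the two sub-blocks is crossed by all $n$ connecting arcs---so the $l$ fixed points again distribute by stars and bars into the $2(n+k+j)-1$ internal gaps, giving $\binom{2(n+k+j)-2+l}{l}=\binom{2(n+k+j-1)+l}{l}$. Assembling the factors and summing produces $T(x,y,z,p,s)$.

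The hard part is not the arithmetic but the two structural verifications underlying the fixed-point counts: first, that every admissible core is indecomposable and, more precisely, that each of its internal gaps is crossed by at least one arc (so fixed points may be inserted freely there) while the two outermost gaps are not (so no fixed point may be placed at an end); and second, for Type~OCOC, that the triple $(n,k,j)$ together with the matching data is a genuine bijective encoding of admissible cores---that the block sizes are forced by the diagram and that distinct data yield distinct matchings. Once these are in hand, the three generating functions follow by reading off the marks and applying stars and bars, exactly as sketched.
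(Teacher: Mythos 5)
Your proposal is correct and follows essentially the same route as the paper: factor each admissible indecomposable type into its matching data (giving the factorial and binomial matching counts) and then insert the $l$ fixed points strictly between the first and last matched vertices by stars and bars, yielding the binomial factors $\binom{2n-2+l}{l}$ and $\binom{2(n+k+j-1)+l}{l}$. The only difference is that you spell out the justification that every internal gap of a core is crossed by an arc (so insertion preserves indecomposability) while the endpoints are excluded, a point the paper leaves implicit.
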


\begin{proof}
Case~\ref{item:one:matchGF} is clear. 

Case~\ref{item:two:matchGF} has $n$ openers with $n!$ ways of closing them while $l$ fixed points are distributed between the first and last vertices, namely, $\binom{2n-2+l}{l}$ ways.

Case~\ref{item:three:matchGF} begins in Figure~\ref{fig:extend} with $n+k$ openers of which $k$ will be closed in the first block of closers, thus $\binom{n+k}{k}$ ways of choosing the $k$ openers and $k!$ ways to close them. Similarly, the second block of $j$ closers are chosen and matched to their openers of the second block. Finally, the remaining $n$ openers of the first block will close with the closers of the second block as connecting arcs in $n!$ ways. The last binomial factor is the number of ways $l$ fixed points can be distributed strictly between $2(n+k+j)$ opener/closer vertices.
\end{proof}

The coefficients of $s^n$ in $O(1, 1, s)$  grow asymptotically as
\[
(\frac{n}{e})^{n/2} (\frac{1}{\sqrt{2}})^n e^{3\sqrt{n}/2} n^{\alpha}
\]
for some constant $\alpha$ and maybe  a factor of $(\log n)^m$, but the latter would require a lot more than $100,000$ terms and more than $4$ Gb of memory. 

\begin{theorem}
\label{thm:matchingGF}
The ordinary generating function for partial matchings treatable by PTR is 
\[
f(x, y, z, p, s, ) = \frac{1}{1 - (ps + O(z,p,s) + T(x, y, z, p, s))}.
\]
\end{theorem}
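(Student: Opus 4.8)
The goal is to derive the generating function for all partial matchings treatable by PTR from the generating functions of the three indecomposable building blocks established in Proposition~\ref{prop:matchingGF}. The plan is to invoke the sequence (or star) construction from the symbolic method: an admissible partial matching is, by the notion of decomposability introduced at the start of Section~\ref{sec:matchings}, precisely a sequence of indecomposable components, each of which must be one of the three admissible types, Type P, Type OC, or Type OCOC.

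First I would argue that every admissible partial matching decomposes uniquely as a concatenation of indecomposable intervals. By the definition of decomposable, a vertical sweep line that meets no arc strictly between two vertices marks a cut point; the maximal pieces between consecutive cut points are exactly the indecomposable components, and this decomposition is unique. Since the class under consideration is the class of matchings treatable by PTR, and PTR is applied componentwise, each indecomposable component must itself be admissible, hence of Type P, Type OC, or Type OCOC. Because all the marking variables $x, y, z, p, s$ are additive over concatenation (closers in first blocks, openers in second blocks, connecting arcs, fixed points, and total size all simply add when components are juxtaposed), the generating function of a single admissible indecomposable component is the sum of the three individual series, namely $ps + O(z,p,s) + T(x,y,z,p,s)$.

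Next I would apply the standard sequence construction of the symbolic method: if an object is an ordered sequence of components drawn from a class with generating function $C$, then the generating function of the whole class is $1/(1 - C)$, where the constant term of $C$ must vanish so that the geometric series converges as a formal power series. Here $C = ps + O(z,p,s) + T(x,y,z,p,s)$, and each summand has strictly positive order in $s$ (Type P contributes $s^1$, and both $O$ and $T$ begin at $s^2$ or higher, as their summation indices satisfy $n \ge 1$), so $C$ has zero constant term and the substitution is legitimate. Summing the geometric series yields exactly
\[
	f(x,y,z,p,s) = \frac{1}{1 - \bigl(ps + O(z,p,s) + T(x,y,z,p,s)\bigr)},
\]
which is the claimed formula.

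The main obstacle I anticipate is the bookkeeping justification that the marking variables are genuinely multiplicative under the sequence construction, i.e.\ that concatenating two admissible intervals produces an admissible interval whose statistics are the sums of the parts and, crucially, that no crossings or nestings are \emph{created across} the cut points. This is exactly what indecomposability buys us: at a cut point the sweep line meets no arc, so no arc spans two distinct components, and therefore every $k$-crossing and $k$-nesting lives entirely within a single component. I would state this explicitly to confirm that the refinement statistics (and hence the variables $x,y,z,p$) are additive, after which the passage to $1/(1-C)$ is routine. The remaining details—checking that the three types exhaust the admissible indecomposable components and that their series are disjointly parametrized—are straightforward given Proposition~\ref{prop:matchingGF}.
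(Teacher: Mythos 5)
Your proposal is correct and takes the same route as the paper, whose entire proof is the single sentence ``We use the sequence construction''; you have simply made explicit the unique decomposition into indecomposable components, the additivity of the marking variables across cut points, and the vanishing constant term that legitimizes the geometric series. No gaps.
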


\begin{proof}
We use the sequence construction.
\end{proof}

The initial numbers $1$, $1$, $2$, $4$, $10$, $26$, $76$, $232$, $756$, $2548$, $8906$, $31846$, $116422$, $432758$, $1634944$ do not match any known sequence in the On-Line Encyclopedia of Integer Sequences. However, not surprisingly, up to size $7$, the numbers are the same as the number of involutions (or self-inverse permutations). The ratio of the number of PTR treatable partial matchings to the number of involutions of size $n$ drops very quickly from $0.094223$ for $n=15$ to $0.037168$ for $n=29$ to $0.0002967$ for $n=39$. 

\subsection{Enhanced Matchings}

For enhanced partial matchings, a fixed point is drawn as a loop and contributes first an opener, then a closer; thus it can no longer be distributed at will among other vertices of an indecomposable block. Instead, it can only occur as the middle vertex of Type OC or at either one or both of the switches from an opener to a closer in Type OCOC.

\begin{proposition}
\label{prop:enhancedmatching}
\begin{enumerate}
\item Type P is generated by $ps$.

\item Type OC is generated by 
\[
O_E(z, p ,s)= (1+sp) \sum_{n \ge 1} n! z^n s^{2n}.
\]

\item Type OCOC is generated by 
\begin{multline}
T_E(x, y ,z, p, s) =    \sum_{ n,j,k\ge 1}
				 n!\binom{n+k}{k} k!\binom{n+j}{j} j!  x^k y^j z^n s^{2(k+j+n)}
				 \\
				  + 2sp \sum_{ \substack{n,j  \ge 1\\  k  \ge 0}}
				 n!\binom{n+k}{k} k!\binom{n+j}{j} j!  x^k y^j z^n s^{2(k+j+n)}
				 \\
				+(sp)^2\sum_{ \substack{n\ge 1\\  j, k  \ge 0}}
				 n!\binom{n+k}{k} k!\binom{n+j}{j} j!  x^k y^j z^n s^{2(k+j+n)}.
\end{multline}
\end{enumerate}
\end{proposition}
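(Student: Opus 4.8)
The plan is to reuse the arc-matching counts from Proposition~\ref{prop:matchingGF} verbatim and to replace only the way fixed points are inserted. The essential new constraint is that an enhanced fixed point is drawn as a loop, hence after inflation it is an opener immediately followed by a closer and can no longer be slipped between two arbitrary consecutive vertices; it may occupy only a position where the diagram already passes from an opener to a closer. Type~P is then immediate: one loop gives one vertex and one fixed point, so $ps$. For Type~OC I would begin from the bare matching count $\sum_{n\ge1} n!\,z^n s^{2n}$, in which $n!$ records the matchings of $n$ openers to $n$ closers, $z$ marks each connecting arc, and $s^{2n}$ the $2n$ vertices. Such a block has a single opener-to-closer switch, its middle, so at most one loop may be inserted and only there; inserting none contributes~$1$ and inserting one contributes~$sp$, which produces the prefactor $(1+sp)$ and hence $O_E$.

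The Type~OCOC case is the substantive one, and I would organise it by the number of inserted loops, using that an OCOC block (Figure~\ref{fig:extend}) has exactly two opener-to-closer switches, namely at the end of each opener block, while the interior closer-to-opener transition is not a switch. The count $n!\binom{n+k}{k}k!\binom{n+j}{j}j!$ of the three families of arcs is exactly the one derived in Proposition~\ref{prop:matchingGF}, and I would carry it over together with $x^k y^j z^n s^{2(k+j+n)}$. With no loop present the block must genuinely display both switches, forcing $k,j\ge1$; this is the first summand. Inserting one loop at a switch supplies the opener-to-closer transition that the adjacent inner block would otherwise have had to provide, so that block index may drop to~$0$; the loop contributes~$sp$, and because the arc-count is symmetric under interchanging the two inner blocks $j\leftrightarrow k$, the two placements merge into the single middle summand with factor~$2sp$ and one relaxed index. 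A loop at each switch relaxes both indices to~$\ge0$ and contributes $(sp)^2$, giving the last summand.

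The step I expect to be most delicate is the bookkeeping of these degenerate configurations. I must confirm that when an inner block is empty the inserted loop genuinely preserves the OCOC shape---so that the object is not silently a Type~OC block or something smaller---that the relaxed ranges $k\ge0$ and $j\ge0$ are exactly the right ones, and that the $j\leftrightarrow k$ symmetry of $\binom{n+k}{k}k!\binom{n+j}{j}j!$ legitimately fuses the two one-loop placements into a single term carrying the coefficient~$2$. Once these boundary cases are checked, the three generating functions assemble termwise just as in Proposition~\ref{prop:matchingGF}.
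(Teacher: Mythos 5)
Your proof matches the paper's: the paper likewise obtains $O_E$ by restricting the loop to the single opener-to-closer switch of an OC block (hence the factor $1+sp$ and the disappearance of the fixed-point placement binomial), and obtains $T_E$ by placing zero, one, or two loops at the two switches of an OCOC block, with the corresponding summation index starting at $0$ because the loop already supplies an opener/closer pair in that sub-interval. The only caveat --- present equally in the paper's own statement --- is that fusing the two one-loop placements into a single $2sp$ term uses the $j\leftrightarrow k$ symmetry of the coefficient $n!\binom{n+k}{k}k!\binom{n+j}{j}j!$, which is exact only after also exchanging the marks $x$ and $y$; this does not affect the enumeration by size.
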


\begin{proof}
The proof is similar to the non-enhanced case except that fixed points are either in the middle of an OC block or not present at all, thus the factor $1 + sp$, and the lack of the binomial factor that accounts for the placement of an arbitrary number of  fixed points in the non-enhanced case. Note that the presence of a fixed point contributes an opener/closer pair in a sub-interval already, so the summation index, $k$, in the second summation, associated with that particular sub-interval begins with $0$ tracking other opener/closer pairs of that sub-interval. Similarly for two fixed points, one for each sub-interval, two indices, $j$ and $k$, in the summation of the last term start at $0$.
\end{proof}

\begin{theorem}
\label{thm:enhanced matching}
The ordinary generating function for enhanced partial matchings treatable by PTR is
\[
f_E(x, y, z, p, s, ) = \frac{1}{1 - (ps + O_E(z,p,s) + T_E(x, y, z, p, s))}.
\]
\end{theorem}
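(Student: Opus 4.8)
The plan is to establish Theorem~\ref{thm:enhanced matching} by the same sequence (or $\Phi$-star) construction used in Theorem~\ref{thm:matchingGF}, now applied to the enhanced building blocks enumerated in Proposition~\ref{prop:enhancedmatching}. The key structural observation is that every partial matching treatable by PTR decomposes uniquely, reading from left to right, into a sequence of indecomposable components, each of which is one of the three admissible types: Type P, Type OC, or Type OCOC. Because the definition of \emph{decomposable} identifies precisely the vertical sweep positions where the diagram splits, this decomposition into indecomposable pieces is canonical and the pieces concatenate without interaction, so the generating function for the whole class is obtained from the generating function for a single admissible indecomposable component via the sequence construction $1/(1-C)$.

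First I would assemble the generating function $C_E$ for a single admissible enhanced indecomposable component as the sum over the three types:
\[
   C_E(x,y,z,p,s) = ps + O_E(z,p,s) + T_E(x,y,z,p,s),
\]
using the three series computed in Proposition~\ref{prop:enhancedmatching}. Here $ps$ is the Type P contribution, $O_E$ is the Type OC contribution, and $T_E$ is the Type OCOC contribution; each already correctly accounts for the enhanced convention that a fixed point is drawn as a loop supplying an opener followed by a closer. Then I would invoke the sequence construction: a PTR-treatable enhanced partial matching is precisely an ordered sequence (possibly empty) of such indecomposable components, so its generating function is the geometric series $\sum_{m \ge 0} C_E^{\,m} = 1/(1 - C_E)$, which is exactly the claimed formula for $f_E$.

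The main point requiring care is the legitimacy of the sequence construction here, namely that the admissible class is closed under concatenation and that every admissible diagram factors \emph{uniquely} into admissible indecomposable blocks. I would argue that concatenating admissible indecomposable blocks places each block in its own sweep-interval separated from the others by empty vertical cuts, so the result is again PTR-treatable and its label is the componentwise sum of the blocks' labels; conversely, cutting an admissible diagram at its leftmost empty sweep position and iterating recovers the unique factorization. Since the marking variables $x,y,z,p,s$ are all additive under concatenation of blocks (closers in first blocks, openers in second blocks, connecting arcs, fixed points, and total size respectively), the generating functions multiply, and the sequence construction yields $1/(1-C_E)$ verbatim. The one genuine subtlety is that an enhanced fixed point contributes a loop rather than an isolated vertex, so one must confirm that such a loop still constitutes an indecomposable block with an empty sweep on either side; this is immediate since a single loop admits vertical cuts just outside its two endpoints, so Type P remains a legitimate atom of the sequence and the construction goes through exactly as in the non-enhanced case.
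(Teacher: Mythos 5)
Your proposal is correct and matches the paper's (very terse) argument: the paper proves Theorem~\ref{thm:enhanced matching} exactly as it proves Theorem~\ref{thm:matchingGF}, namely by applying the sequence construction to the sum of the three indecomposable types from Proposition~\ref{prop:enhancedmatching}. Your additional remarks on unique factorization into indecomposable blocks and on loops as legitimate Type P atoms simply make explicit what the paper leaves implicit.
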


The numbers generated by this series begin with $1$, $1$, $2$, $4$, $10$, $25$,  $67$, $180$, $496$, $1370$, $3863$, $10881$, $31448$, $90280$ matching up to $n=8$ with A124500 in OEIS, otherwise a new sequence.

\subsection{Set Partitions}

Through the process of inflation of transitories, we can enumerate set partitions treatable by PTR using the enumerative results of matchings in the previous sections. However, an indecomposable interval is no longer limited to three types: Type P, OC, and OCOC because a transitory can be used to connect Types OC and OCOC arbitrarily often. 

In this case where a fixed point contributes no nesting, and a transitory is a closer followed by an opener, we can still intersperse fixed points at will, but transitories can either occur in the middle of OCOC, the switch from C to O, or join indecomposable matchings of either Type OC or OCOC.

The following proposition enumerates Type OC and OCOC for set partitions.

\begin{proposition}
\label{prop:set partitions}
\begin{enumerate}
\item\label{item:one:setpart} Type OC is generated by 
\[
   O_S(z, p ,s)
   = \sum_{n \ge 1, l \ge 0} n! \binom{2n-2+l}{l} z^n p^l s^{2n+l} 
   \qquad\text{(${}=O(z, p,s)$).}
\]

\item\label{item:two:setpart} Type OCOC is generated by 
\begin{multline*}
   T_S(x, y ,z, p, s) = 
   \sum_{\substack{l \ge 0\\ n,j,k\ge 1}} 
      n! \binom{n+k}{k} k! \binom{n+j}{j} j! 
      \\ \cdot
      \left(
         \binom{2(n+k+j-1)+l}{l} 
         + \frac1{s} \binom{2(n+k+j-1)+l-1}{l} 
      \right)
      \\ \cdot
      x^k y^j z^n p^l s^{2(k+j+n)+l}.
\end{multline*}
\end{enumerate}
\end{proposition}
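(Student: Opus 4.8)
The plan is to count set partitions by inflating every transitory vertex into an adjacent closer--opener pair, so that each indecomposable piece becomes one of the matching building blocks already enumerated in Proposition~\ref{prop:matchingGF}, and then to track the extra combinatorial freedom that transitories introduce beyond what fixed points contribute. For Type OC the argument should be immediate: a Type OC indecomposable interval has a block of consecutive openers followed by a block of consecutive closers and, as noted before Theorem~\ref{thm:partition}, this configuration forbids transitory vertices entirely. Hence the only interior vertices are fixed points, which (in the non-enhanced setting) contribute no nesting or crossing and can be interspersed freely exactly as in the matching case. I would therefore argue that the generating function is literally unchanged from the matching count $O(z,p,s)$, which is why the proposition records $O_S(z,p,s)=O(z,p,s)$.

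For Type OCOC the main work is to explain the new second summand inside the large parenthesis. First I would recall the matching computation behind $T(x,y,z,p,s)$: the factor $n!\binom{n+k}{k}k!\binom{n+j}{j}j!$ counts the ways to choose and close the $k$ first-block closers, the $j$ second-block closers, and the $n$ connecting arcs, while $\binom{2(n+k+j-1)+l}{l}$ distributes $l$ fixed points among the $2(n+k+j)$ vertices (the $-1$ recording that a fixed point cannot sit beyond the terminal vertices of an indecomposable interval). The new ingredient for set partitions is that, as explained in the text, a transitory may occur precisely at the switch from closers back to openers, i.e.\ at the interior C-to-O junction of the OCOC pattern. I would treat the ``no transitory at the junction'' case and the ``one transitory at the junction'' case separately, and observe that the block-structure factors $n!\binom{n+k}{k}k!\binom{n+j}{j}j!$ are common to both, so they factor out.

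The heart of the calculation is then to see that the ``one transitory'' case produces exactly the correction term $\tfrac1s\binom{2(n+k+j-1)+l-1}{l}$. After inflation, a single transitory becomes a closer--opener pair glued together at the junction; this consumes one opener--closer worth of structure already accounted for by the block factors, so it adds no further multiplicative choice beyond its placement. Crucially it lowers the vertex count by one relative to the nominal inflated count, which is what the factor $1/s$ records (the honest partition has one fewer vertex than its inflation), and it also reduces by one the number of gaps available for the $l$ fixed points, giving $\binom{2(n+k+j-1)+l-1}{l}$ in place of $\binom{2(n+k+j-1)+l}{l}$. Summing the two cases yields the bracket $\bigl(\binom{2(n+k+j-1)+l}{l}+\tfrac1s\binom{2(n+k+j-1)+l-1}{l}\bigr)$, and multiplying through by the monomial $x^ky^jz^np^ls^{2(k+j+n)+l}$ and the block factors gives the stated $T_S$.

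The step I expect to be the main obstacle is bookkeeping the $1/s$ factor consistently: one must be careful that the transitory is counted as a genuine single vertex of the set partition rather than as the two vertices of its inflation, and that the fixed-point distribution binomial is adjusted in lockstep with this reduction in vertex count. The cleanest way to avoid an off-by-one error is to fix the inflated diagram first, enumerate there using the matching rules, and only then deflate, letting the $s^{-1}$ and the shifted binomial upper index $2(n+k+j-1)+l-1$ record the single deflation at the junction. A secondary subtlety worth stating explicitly is that a transitory at the junction must be distinguished from an ordinary C-to-O boundary with no transitory, which is exactly why the two summands appear additively rather than being merged.
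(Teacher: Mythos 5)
Your proposal is correct and follows essentially the same route as the paper: the paper's (very terse) proof likewise notes that Type OC excludes transitories so $O_S=O$, and splits Type OCOC according to whether the central C-to-O switch is proper or a transitory, the two cases giving the two binomial summands. Your accounting of the $1/s$ factor (one vertex lost to deflation) and the shift in the fixed-point binomial (one fewer gap) correctly fills in the details the paper leaves implicit.
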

\begin{proof}
Case~\ref{item:two:setpart} is Type OCOC where the switch in the middle from C to O is proper or a transitory. Both cases are accounted for in the sum of two binomial factors of the larger set of  parentheses.
\end{proof}

\begin{proposition}
\label{prop:indec set partition}
The ordinary generating function of an indecomposable set partition treatable by PTR is
\[
\begin{split}
   N_S(x, y ,z, p, s) 
   &= ps + \sum_{m \ge 1} \frac1{s^{m-1}} (O_S + T_S)^m 
   \\
   &= ps - s + \frac{s^2}{s - O_S - T_S}  .
\end{split}
\]
\end{proposition}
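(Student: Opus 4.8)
The plan is to decompose an indecomposable PTR-treatable set partition into simpler pieces and read off the generating function from that decomposition. The key structural fact, already isolated in the discussion preceding the proposition, is that transitory vertices are the only device that can weld two indecomposable OC or OCOC blocks together without destroying indecomposability: after inflating such a junction one obtains a closer $v'$ immediately followed by an opener $v''$, and a vertical sweep between them is free of arcs precisely because no arc spans across the junction. I would therefore claim that an indecomposable PTR-treatable set partition is either a single fixed point (Type P) or a chain $B_1 B_2 \cdots B_m$ of $m \ge 1$ blocks, each of Type OC or Type OCOC, in which consecutive blocks $B_t, B_{t+1}$ share exactly one vertex, the transitory formed by identifying the final closer of $B_t$ with the initial opener of $B_{t+1}$. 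Recall from Proposition~\ref{prop:set partitions} that a single block of either admissible type is generated by $O_S + T_S$, with interior fixed points already recorded by the $p^l$ factors there.

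First I would establish that this chain decomposition exists and is unique. Since the diagram is indecomposable, every internal sweep line meets an arc; the only candidate splitting positions are the transitories at which no arc spans across, and cutting the diagram at exactly these transitories produces the canonical list $B_1, \dots, B_m$. The delicate point is that not every transitory is a cut point: the interior transitory permitted at the C-to-O switch of an OCOC block (the source of the extra $\frac1s$ term in $T_S$) is straddled by the $n$ connecting arcs of that block and hence is not a splitting position. Distinguishing these spanned interior transitories from the genuinely free junction transitories is what makes the decomposition well defined, and verifying it is the main obstacle of the proof; the connecting arcs of an OCOC block provide exactly the spanning arcs that keep such a block indecomposable.

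Next comes the generating-function bookkeeping. Concatenating the $m$ blocks multiplies their series to give $(O_S + T_S)^m$; however, at each of the $m-1$ junctions the final vertex of one block and the initial vertex of the next are a single transitory, so each such vertex has been counted once in each of the two adjacent blocks. Because $s$ marks the number of vertices, every junction inflates the recorded size by exactly one, and dividing by $s^{m-1}$ corrects all $m-1$ shared vertices at once. A chain of length $m$ is therefore generated by $s^{-(m-1)}(O_S + T_S)^m$, and summing over $m \ge 1$ while adding the lone fixed point gives
\[
   N_S(x,y,z,p,s) = ps + \sum_{m \ge 1} \frac{1}{s^{m-1}} (O_S + T_S)^m .
\]

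Finally I would evaluate the geometric series. Writing $u = O_S + T_S$, which as a formal power series has $s$-order at least $2$, one has $\sum_{m \ge 1} u^m/s^{m-1} = s\sum_{m\ge1}(u/s)^m = su/(s-u)$, and since $su/(s-u) = s^2/(s-u) - s$, this collapses to
\[
   N_S = ps - s + \frac{s^2}{s - O_S - T_S},
\]
as claimed. Once the chain decomposition is justified and the single factor of $s^{-1}$ per junction is pinned down, this last step is routine.
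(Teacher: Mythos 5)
Your proposal is correct and follows essentially the same route as the paper: concatenating Type OC and OCOC blocks by identifying the last vertex of one with the first vertex of the next (creating a transitory and losing one vertex, hence the factor $s^{-(m-1)}$), then summing the geometric series. The paper's own proof is just a terser version of this; your added discussion of why the junction transitories give a well-defined decomposition is a reasonable elaboration rather than a different argument.
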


\begin{proof}
An indecomposable set partition is concatenated from Types OC and OCOC by identifying the  last vertex of the previous block with the first vertex of the next block,  turning it into a transitory and reducing the number of vertices by one. This accounts for the sum in the first equality. Writing the first equality using a geometric series sum formula results in the last equality.
\end{proof}

\begin{theorem}
\label{thm:setpartGF}
The ordinary generating function for set partitions treatable by PTR is 
\[
S(x, y, z, p, s, ) = \frac{1}{1 - N_S(x, y, z, p, s)}.
\]
\end{theorem}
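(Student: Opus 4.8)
The plan is to realize a PTR-treatable set partition as a sequence of indecomposable PTR-treatable intervals and then invoke the sequence construction, exactly as in the proof of Theorem~\ref{thm:matchingGF}. First I would recall from the definition of decomposability that every arc diagram splits, at the positions where a swept vertical line meets no arc, into a uniquely determined left-to-right list of indecomposable intervals; for set partitions the generating function of a single such interval has already been computed to be $N_S(x,y,z,p,s)$ in Proposition~\ref{prop:indec set partition}. So the content of the theorem is just that the outer composition of these intervals is an honest $\mathrm{SEQ}$.

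The key structural observation I would verify next is that no crossing and no nesting can involve arcs from two \emph{different} indecomposable intervals. At a decomposition position the sweep line crosses no arc, so of any two arcs lying in distinct intervals one is entirely to the left of the cut and the other entirely to the right; two such arcs neither cross nor nest. Consequently every entry of the label $L(\pi)$, and therefore every marking variable $x$, $y$, $z$, $p$ together with the size variable $s$, is additive over the list of intervals. This additivity is precisely what makes the multivariate generating function of a sequence equal to the product of the generating functions of its parts, so the sequence construction yields
\[
   S(x,y,z,p,s) = \sum_{m \ge 0} N_S(x,y,z,p,s)^m = \frac{1}{1 - N_S(x,y,z,p,s)},
\]
with the $m=0$ term accounting for the empty partition.

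The subtle point, and the step I expect to require the most care, is to keep the inter-interval concatenation cleanly distinct from the intra-interval gluing already absorbed into $N_S$. Within a single indecomposable interval, consecutive OC and OCOC blocks are joined by identifying a closer with the following opener into one transitory vertex, and this shared vertex is exactly what produced the $1/s^{m-1}$ (equivalently the $\tfrac1s$) corrections in Proposition~\ref{prop:indec set partition}. Between two distinct indecomposable intervals, by contrast, no vertex is shared, the sizes simply add, and the plain formula $1/(1-N_S)$ applies with no such correction. To make this rigorous I would check that a decomposition position never sits at a transitory: a transitory vertex $v$ carries an arc ending at it and an arc starting at it, so the gap immediately to its left crosses the former and the gap immediately to its right crosses the latter, placing $v$ strictly interior to its interval. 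This guarantees that the blocks enumerated by $N_S$ meet only at clean arc-free cuts, so that the outer combinatorial class really is the sequence class and the geometric-series identity above is exact.
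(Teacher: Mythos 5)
Your proposal is correct and follows the same route the paper intends: the paper proves the analogous matching statement (Theorem~\ref{thm:matchingGF}) with the single line ``we use the sequence construction'' and states Theorem~\ref{thm:setpartGF} without further proof, relying on Proposition~\ref{prop:indec set partition} for $N_S$ exactly as you do. Your added verifications---that arcs in distinct indecomposable intervals neither cross nor nest (so the label and all marking exponents are additive), and that a cut position cannot abut a transitory (so inter-interval concatenation shares no vertex and needs no $1/s$ correction)---are precisely the details the paper leaves implicit.
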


This sequence is hard to generate because of $N_S$. The enhanced set partitions have a simpler series which is generated next.

\subsection{Enhanced Set Partitions}

In the enhanced case when a fixed point and a transitory are both inflated to an opener followed by a closer, either one can occur in the switch from opener to closer of Type OC or Type OCOC. Note that a transitory can no longer be joining an indecomposable interval to another as it did in the case of normal nesting and crossing, for doing so may  result in a set partition no longer treatable by PTR. This results in a simpler generating function.

\begin{proposition}
\label{prop:enhanced set partitions}
\begin{enumerate}
\item A fixed point is generated by $s$.

\item\label{item:one:ensetpart} Type OC is generated by 
\[
   O_{SE}(z, s)
   = (1 + s) \sum_{n \ge 1} n!\,  z^n s^{2n} + s \sum_{n \ge 1}   n!\, n z^n s^{2n}.
 \]

\item\label{item:two:ensetpart} Type OCOC is generated by 
\begin{multline*}
   T_{SE}(x, y ,z, s) = \sum_{j, k, n\ge 1}
      n! \binom{n+k}{k} k! \binom{n+j}{j} j!   x^k y^j z^n s^{2(k+j+n)}
      \\
      + 2s \sum_{\substack{k\ge 0\\ n, j\ge 1}} 
      n! \binom{n+k}{k} k! \binom{n+j}{j} j!   x^k y^j z^n s^{2(k+j+n)}
      \\
      + s^2 \sum_{\substack{j, k\ge 0\\ n\ge 1}} 
      n! \binom{n+k}{k} k! \binom{n+j}{j} j!   x^k y^j z^n s^{2(k+j+n)}
      \\
\shoveleft{      +2s \sum_{\substack{ k\ge 0\\ n, j\ge 1}} 
      n! \left( \binom{n+k}{k} k!\, k + \binom{n+k}{k+1} (k+1)! \right)}
      \\
      \shoveright{ \times  \binom{n+j}{j} j!   x^k y^j z^n s^{2(k+j+n)} }
      \\
 \shoveleft{     +2s^2 \sum_{\substack{ j, k\ge 0\\ n\ge 1}} 
      n! \left( \binom{n+k}{k} k!\, k + \binom{n+k}{k+1} (k+1)! \right) }
      \\
      \shoveright{\times \binom{n+j}{j} j!   x^k y^j z^n s^{2(k+j+n)} }
      \\
 \shoveleft{     +s^2 \sum_{\substack{j, k\ge 0\\ n\ge 1}} 
      n! \left( \binom{n+k}{k} k!\, k + \binom{n+k}{k+1} (k+1)! \right)  }
      \\
  \times  \left( \binom{n+j}{j} j!\, j + \binom{n+j}{j+1}(j+1)! \right)   x^k y^j z^n s^{2(k+j+n)}.  
\end{multline*}
\end{enumerate}
\end{proposition}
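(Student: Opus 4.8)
The plan is to run the same case analysis that underlies Proposition~\ref{prop:enhancedmatching}, the only new ingredient being that a \emph{switch} (a transition from openers to closers) may now also be occupied by a transitory. As recorded just before the statement, in the enhanced regime a fixed point and a transitory both inflate to an opener immediately followed by a closer, and a transitory can no longer splice two indecomposable intervals together; consequently a Type~OC block has exactly one switch and a Type~OCOC block has exactly two, and at each switch independently one of three things happens — nothing, a fixed point, or a transitory. The whole proof is an enumeration over these occupancy patterns, with the base matching counts taken from Proposition~\ref{prop:matchingGF}\ref{item:two:matchGF} and~\ref{item:three:matchGF} (specialised to no \emph{free} fixed points, since enhanced fixed points can no longer float among the other vertices). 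The fixed-point claim is immediate: a loop is a single vertex, generated by $s$.

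For Type~OC I would begin from the base series $\sum_{n\ge1} n!\,z^n s^{2n}$. Its single switch is empty, carries a fixed point, or carries a transitory. The empty and fixed-point cases leave the $n!$ matching count untouched and contribute the factor $1+s$, giving the first summand of $O_{SE}$. A transitory inflates to a fresh opener--closer pair, so the augmented block matches $n+1$ openers to $n+1$ closers in $(n+1)!$ ways; from this I subtract the $n!$ arrangements in which the new opener closes with its own new closer, since those produce a loop rather than a genuine transitory. This leaves $(n+1)!-n! = n\cdot n!$, and the extra vertex contributes a factor $s$, producing the second summand $s\sum_{n\ge1} n!\,n\,z^n s^{2n}$.

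For Type~OCOC I would take the base series of Proposition~\ref{prop:matchingGF}\ref{item:three:matchGF} (again with no free fixed points) and repeat the insertion analysis at each of the two switches, summing over all $3\times3$ occupancy patterns. A fixed point at a switch contributes $s$ and, because it already supplies that block's opener--closer pair, relaxes the adjacent summation index ($k$ at the first switch, $j$ at the second) to start at $0$. A transitory at the first switch contributes $s$ and, by the ``extra pair minus the self-closing loop'' reasoning of the OC case, replaces the first block's matching factor $\binom{n+k}{k}k!$ by $\binom{n+k+1}{k+1}(k+1)! - \binom{n+k}{k}k!$, with the analogous replacement of $\binom{n+j}{j}j!$ at the second switch. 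Grouping the nine patterns by symmetry then reproduces the six displayed summands: the empty pattern is the first line; a single fixed point (two switch choices) is the $2s$ line; two fixed points is the $s^2$ line; a single transitory (two choices) is the $2s$ line with the modified factor; a transitory together with a fixed point (two choices) is the $2s^2$ line; and transitories at both switches is the final $s^2$ line with both factors modified.

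The one step needing genuine verification rather than bookkeeping is the algebraic identity
\[
\binom{n+k+1}{k+1}(k+1)! - \binom{n+k}{k}k!
= \binom{n+k}{k}k!\,k + \binom{n+k}{k+1}(k+1)!
= \frac{(n+k)!}{n!}\,(n+k),
\]
which is what recasts the ``extra pair minus loop'' count into the form displayed inside $T_{SE}$; writing every term as $\tfrac{(n+k)!}{n!}(n+k)$ settles it at once. I expect the main obstacle to be conceptual rather than computational: keeping straight, across all nine patterns, which vertices are \emph{visible} closers marked by $x$ or $y$ and which belong to an inflated fixed point or transitory marked only by $s$, and correctly distinguishing the transitory insertion (which must exclude the self-closing loop) from the fixed-point insertion (which need not). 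Once the single-switch OC analysis is carried out cleanly, the OCOC case is essentially its product over the two switches, together with the symmetry factors of $2$.
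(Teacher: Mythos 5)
Your proposal is correct and follows essentially the same route as the paper: a case analysis over whether each switch is empty, carries a fixed point, or carries a transitory, built on the base counts from the matching propositions with the floating-fixed-point binomial removed. The only difference is cosmetic — you derive the transitory factor by inclusion–exclusion ($(n+1)!-n!$, resp.\ $\binom{n+k+1}{k+1}(k+1)!-\binom{n+k}{k}k!$) and then verify the identity matching the displayed form, whereas the paper counts the same quantity directly by splitting on whether the transitory's opener closes inside the first sub-block; both give $\binom{n+k}{k}k!\,k+\binom{n+k}{k+1}(k+1)!$.
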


\begin{proof}
In this generating function, fixed points (and transitories) which can no longer be ignored are  marked by $s$ instead of $p$. The first term of Type OC includes a factor $(1+s)$ where $s$ accounts for a fixed point  in the middle of the switch from O to C;
the second sum is for a Type OC with a transitory for the switch from an opener to a closer. When $n$ opener/closer pairs are combined with a transitory, $n+1$ openers are available, but the transitory cannot be closed to itself otherwise turning it into a fixed point. This gives only $n$ choices for the transitory to close. Once matched to a proper closer among the $n$, the other openers have $n!$ ways to close.

The six sums from Type OCOC can be divided into two parts, the first three sums form the first half where
 \begin{enumerate*}[label=\emph{\alph*})]
  \item neither a fixed point nor a transitory is present, or
  \item only one fixed point but no transitory is present, or
  \item two fixed points are present.
  \end{enumerate*}
  The proof for the first half of $T_{SE}$ is similar to  that of enhanced matching from Proposition~\ref{prop:enhancedmatching} regarding the extra factor of $s$ or $s^2$ and the change in the start of some of the summation indices.
  
  The second half of $T_{SE}$ are the last three sums where
  \begin{enumerate*}[label=\emph{\alph*})]
  \item only one transitory but no fixed point is present, or
  \item one transitory and one fixed point are both present, or
  \item   two transitories are present.
\end{enumerate*}
  Let us consider first only one transitory but no fixed point. The transitory together with $n+k$ proper opener vertices provides $n+k+1$ openers of which $k+1$ of them must be closed with the $k$ proper closers in the first sub-interval plus the closer from the transitory. There are two ways of choosing $k+1$ openers to match these $k+1$ closers. One is to choose $k+1$ among the $n+k+1$ openers where the transitory is among them, thus $\binom{n+k}{k} k! k$ ways to close; the other is to choose $k+1$ openers among the $n+k$ proper openers to avoid the opener from the transitory, thus $\binom{n+k}{k+1} (k+1)!$ ways to close in the first sub-interval forcing the opener of the transitory to close with one of the closers from the last block of $n+j$ closers. When a transitory is present, the summation index $k$ starts at $0$. 
  
  For the last two sums, similar arguments apply for the presence of a fixed point or another transitory.
 \end{proof}

\begin{theorem}
\label{thm:enhancedsetpartGF}
The ordinary generating function for enhanced set partitions treatable by PTR is 
\[
SE(x, y, z, p,  s ) = \frac{1}{1 - (s + O_{SE} + T_{SE})}.
\]
\end{theorem}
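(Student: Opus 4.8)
The plan is to invoke the \emph{sequence construction} of symbolic combinatorics, exactly as in the proofs of Theorems~\ref{thm:matchingGF} and~\ref{thm:setpartGF}, after isolating the precise way in which the enhanced case differs from the normal case of Proposition~\ref{prop:indec set partition}. By the definition of decomposability, any arc diagram splits uniquely at each vertical sweep line that meets no arc strictly between the first and last vertices; reading these split points from left to right expresses every enhanced set partition treatable by PTR as an \emph{ordered sequence} of indecomposable admissible components, and conversely any juxtaposition of such components yields an admissible diagram. Since a crossing or a nesting is formed by two arcs, and two arcs lying in distinct indecomposable blocks can neither cross nor nest, the label $L$ is additive across the sequence; hence the ordinary generating function of the whole class is the quasi-inverse $1/(1-C)$, where $C$ marks a single indecomposable component.

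By Proposition~\ref{prop:enhanced set partitions}, an indecomposable admissible component is exactly one of three mutually exclusive types---a fixed point (generated by $s$), Type OC (generated by $O_{SE}$), or Type OCOC (generated by $T_{SE}$)---so $C = s + O_{SE} + T_{SE}$ and the stated formula follows.

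The step requiring the most care---and the place where the enhanced case genuinely diverges from Proposition~\ref{prop:indec set partition}---is justifying that the sequence is a \emph{plain} juxtaposition, with no shared boundary vertex and hence no factor $1/s^{m-1}$. In the normal case a transitory inflates to a closer followed by an opener, so it can \emph{glue} the last vertex of one block to the first vertex of the next, contracting two vertices into one; this is precisely what produces the telescoping $\sum_{m\ge 1} s^{1-m}(O_S+T_S)^m$ of Proposition~\ref{prop:indec set partition}. In the enhanced case, however, a transitory inflates to an opener followed by a closer (the same order as an inflated fixed point), so it can only sit internally as an O-to-C switch within a single Type OC or Type OCOC block---as already accounted for inside $O_{SE}$ and $T_{SE}$---and can no longer serve as a connecting vertex between two indecomposable pieces. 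I would verify this directly: were a transitory to straddle two blocks, the vertical line through it would meet the arc entering it from the left, contradicting indecomposability of the right-hand block and, more to the point, producing a diagram outside the PTR-treatable class. With connecting transitories excluded, the indecomposable components are genuinely disjoint, the ordinary sequence construction applies verbatim, and we obtain $SE = 1/(1-(s+O_{SE}+T_{SE}))$.
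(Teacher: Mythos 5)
Your proposal is correct and takes essentially the same route as the paper: the paper's proof of this theorem is implicit (the analogous Theorem~\ref{thm:matchingGF} says only ``We use the sequence construction''), and the text preceding Proposition~\ref{prop:enhanced set partitions} already makes your key point that an enhanced transitory, inflating to opener--closer, cannot glue two indecomposable intervals, which is exactly why the $1/s^{m-1}$ correction of Proposition~\ref{prop:indec set partition} disappears. Your elaboration of that point is a faithful, slightly more detailed version of the paper's own reasoning.
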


The number of enhanced set partitions treatable by PTR begins with $1$, $1$, $2$,  $5$, $15$, $44$, $147$, $439$, $1484$, $4469$, $15217$. Up to $n=6$, the entries match A148351 in OEIS  which counts the number of three dimensional lattice walks in the first octant with some restricted step set, but otherwise unknown. Compared to Bell numbers, the ratio drops very fast also, already at $n=12$, it is $0.00379$.

\subsection{Permutations}

Traditionally, permutations have enhanced nesting and crossing for the upper arcs and non-enhanced for the lower arcs. Therefore, an indecomposable interval in the upper arcs of a permutation is the same as the enhanced set partitions: a fixed point, type OC, or type OCOC. For each type, the opener-closer sequence from the upper arcs of the permutation induces the same opener-closer sequence in the lower arcs (ignoring fixed points or upper transitories). The complication arises from the presence of lower transitories. Since they can occur arbitrarily many times, an indecomposable interval for the upper arc diagram of a permutation has many different lower arc diagrams each composed of a number of indecomposable types OC or OCOC's. This results in a complicated series which will not be derived here.

\section{PTR for Coloured Matchings, Set partitions, and Permutations}

Coloured perfect matchings were introduced by Chen and Guo~\cite{ChG11} who established symmetric joint distribution for crossing and nesting numbers where the arcs are coloured and a $k$-crossing (resp.\ $k$-nesting) applies to only $k$ arcs of the \emph{same} colour forming a $k$-crossing (resp.\ $k$-nesting). Marberg~\cite{Mar12} extended Chen and Guo's result to coloured set partitions where symmetric joint distribution still holds. A coloured set partitions is defined by Marberg as follows.

\begin{definition}[\cite{Mar12}]
Given a positive integer $r$, define an \emph{$r$-coloured partition} of $[n]$ to be a pair $(P, \phi)$ consisting of a set partition $P$ of $[n]$ and a map $\phi : \Arc(P) \rightarrow [r]$ where $\Arc(P)$ is the set of arcs of $P$ in the arc annotated diagram.

 Let $\Lambda$ be an $r$-coloured partition of $[n]$; define $\Arc(\Lambda) = \{ (i,j,t) : (i,j) \in \Arc(P), t= \phi (i,j)\}$. Note that a $P$ can have many different $\Lambda$'s, and this set, with $n$, uniquely determines $\Lambda$.
\end{definition}

Using arc annotated diagrams with the colour of each arc labelled, we can draw a $2$-coloured version of Figure~\ref{fig:partition} as in Figure~\ref{fig:colpart}.

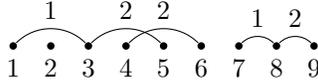
\begin{figure}
\figurefontsize\centering
\begin{tikzpicture}[scale=0.5]
   \foreach \i in {1,...,9}
        \node[pnt,label=below:$\i$] at (\i,0)(\i) {};
   \draw(1)  to [bend left=45] node[above]{$1$}  (3);
   \draw(3)  to [bend left=45] node[above]{$2$} (5);
   \draw(4)  to [bend left=45] node[above]{$2$} (6);
   \draw(7)  to [bend left=45] node[above]{$1$} (8);
   \draw(8)  to [bend left=45] node[above]{$2$} (9);
\end{tikzpicture}
\caption{A $2$-coloured partition $\Lambda =\{(1, 3, 1), (3, 5,2), (4, 6, 2), (7,8,1), (8, 9, 2) \}$. }
\label{fig:colpart}
\end{figure}

In the spirit of~\cite{Chetal07}, Marberg used vacillating $r$-partite tableaux for his bijection between the set of $r$-coloured partitions of $[n]$ and the set of vacillating $r$-partite tableaux of length $2n$, translating maximum crossing (nesting) number of a given $r$-coloured partition to the maximum number of columns (rows) in the set of vacillating $r$-partite tableaux. To complete the bijection for the switching of maximal crossing and nesting numbers, one simply takes component-wise transpose of the new vacillating $r$-partite tableaux to find its matching $r$-coloured partition.

Without passing through a sequence of $r$-partite tableaux, Procedure Triple Reverse still applies to coloured matchings and set partitions because each coloured arc assigns the end points of the arc the same colour and reverse labelling of vertices simply retains the vertex label with its colour. This modification of PTR applies to both enhanced and non-enhanced crossings and nestings.

For coloured permutations, we propose two different definitions:
\begin{definition}
\label{def:perm1}
An $r$-coloured permutation of $[n]$ is a permutation $\sigma_n$ whose arcs in its arc annotated diagram are partitioned into $r$ classes (or arcs are coloured by $r$ different colours, one colour for each arc).
\end{definition}

We remark that for coloured matchings and partitions, restricting the arcs to a subset of the colours still results in a coloured matching or partitions. However, Definition~\ref{def:perm1} may not produce a coloured permutation when the arcs are restricted to a subset of the colours, that is, without the colours, the arc annotated diagram with the restricted set of arcs is not necessarily a permutation. In order to keep substructures of a permutation according to restricted arc sets, we need the following definition:

\begin{definition}
\label{def:perm2}
We require all of Definition~\ref{def:perm1} such that arcs of the same colour also form a permutation.
\end{definition}

For coloured permutations of Definition~\ref{def:perm1}, PTR also applies analogously to the extension from set partitions to permutations for PTR in the uncoloured case. For those of Definition~\ref{def:perm2}, a restriction to Type OC or OCOC can be lifted from the entire permutation to each colour class (or factor(s) of $\sigma_n$) satisfying Type OC or OCOC.

An important difference between uncoloured and coloured cases for the application of PTR is the possibility of extension to a larger class. Once arcs are coloured, opener closer sequence can be partitioned according to colour, thus allowing a larger class to be treated by PTR.

\section{Limitation of Procedure Triple Reverse}

Our Procedure Triple Reverse implies that substructures like indecomposable intervals after the removal of dashed connecting arcs remain indecomposable intervals with the same opener/closer sequence as the start. With this assumption of keeping sub-indecomposable intervals intact, when a connecting arc envelopes three or more indecomposable intervals under it, the procedure cannot be extended any more for the simple reason that the top diagram of  Figure~\ref{fig:badex3} does not have an arc crossing all three indecomposable intervals in its image set partition. Using the bijection of Chen et. al.~\cite{Chetal07} and Krattenthaler~\cite{Kratt06}, the indecomposable substructures under the big enveloping arc disappears, giving way to one indecomposable structure. In this particular example, $3$ $2$-nestings are changed to $3$ $2$-crossings, but not with the same arc.
\begin{figure}
\figurefontsize\centering
\begin{tikzpicture}
   \foreach \i in {1,2,3,4, 5,6,7,8} {
      \node[pnt] (a\i) at (\i, 0) {};
      \node[below] at (a\i.south) {$\i$};
   }
   \foreach \i/\j in {a1/a8, a2/a3, a4/a5, a6/a7}
      \draw[bend left=30] (\i) to (\j);
\end{tikzpicture}

\quad

\begin{tikzpicture}
    \foreach \i in {1,2,3,4, 5,6,7,8} {
      \node[pnt] (a\i) at (\i, 0) {};
      \node[below] at (a\i.south) {$\i$};
   }
   \foreach \i/\j in {a1/a3, a2/a5, a4/a7, a6/a8}
      \draw[bend left=30] (\i) to (\j);   
      
 \end{tikzpicture}
 \caption{A bijective example under Chen et. al. and Krattenthaler}  
 \label{fig:badex3}
  \end{figure}
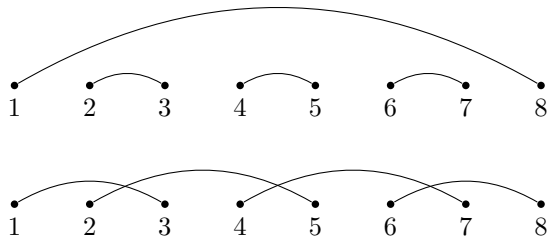

The condition of keeping sub-indecomposable intervals intact may seem restrictive; however, we gain the simplicity of staying within arc diagrams for the bijection of switching crossing and nesting numbers with the added bonus that all refinements of such numbers are also switched.

\section{Further investigation}

The  algebraic nature of this involution leads one to explore matrix representation of this subclass of permutations and their resulting matrices after the involution. The way an anti-diagonal sequence of $1$'s (called North-East chains in the language of growth diagrams in~\cite{Kratt06}) switches to a main diagonal sequence of $1$'s (South-East chains) in each block matrices of the permutation matrix and how the block matrices themselves also switch anti-diagonal/main diagonal directions leads one to think that a bijective map via matrices similar to de Mier's fillings and Krattenthaler's growth diagrams~\cite{deMi07, Kratt06} may be found for permutations.

The inspiration to the second proof from M\'{e}ndez and Rodriguez~\cite{MeRo08} using two-line representation for permutations can be applied to $q$-counting because each crossing represents an inversion. Enumeration of crossing and nesting numbers is a difficult problem; however, translating two-line representations of  such statistics to $q$-counting may shed some light on the log concavity of the sequence of numbers like $k$-noncrossing partitions or permutations as conjectured by Burrill, Elizalde, Mishna, and Yen in~\cite{BuElMiYe11}.

\section{Acknowledgments}
The author would like to thank Marni Mishna, Sophie Burrill, and Brad Jones for helpful discussions, and Manuel Kauers for finding asymptotic behaviour of the coefficients of $O(1, 1, s)$ for a given size of Type OC partial matchings using the machinery developed in the Research Institute of Symbolic Computation in Johannes Kepler University in Austria.

\bibliographystyle{elsarticle-num}

\end{document}